\newtheorem{theorem}{Theorem}[section]
\newtheorem{corollary}[theorem]{Corollary}
\newtheorem{definition}[theorem]{Definition}
\newtheorem{lemma}[theorem]{Lemma}
\newtheorem{sublemma}[theorem]{Sublemma}
\newtheorem{proposition}[theorem]{Proposition}
\newtheorem{remark}[theorem]{Remark}
\numberwithin{equation}{section}
\def\Xint#1{\mathchoice
{\XXint\displaystyle\textstyle{#1}}%
{\XXint\textstyle\scriptstyle{#1}}%
{\XXint\scriptstyle\scriptscriptstyle{#1}}%
{\XXint\scriptscriptstyle\scriptscriptstyle{#1}}%
\!\int}
\def\XXint#1#2#3{{\setbox0=\hbox{$#1{#2#3}{\int}$}
\vcenter{\hbox{$#2#3$}}\kern-.5\wd0}}
\def\dashint{\Xint-}
\newcommand{\R}{\mathbb{R}}
\newcommand{\N}{\mathbb{N}}
\newcommand{\Z}{\mathbb{Z}}
\newcommand{\M}{\mathcal{M}}
\newcommand{\F}{\mathcal{F}}
\newcommand{\Ha}{\mathcal{H}}
\newcommand{\W}[2]{{W^{1,#1}(#2)}}
\newcommand{\Ne}[2]{N^{1,#1}(#2)}
\newcommand{\Nem}[3]{N^{1,#1}(#2;#3)}
\newcommand{\Wem}[3]{W^{1,#1}(#2;#3)}
\newcommand{\spt}{\operatorname{spt}}
\newcommand{\Mod}{\operatorname{Mod}}
\newcommand{\dist}{\operatorname{dist}}
\newcommand{\Lip}{\operatorname{Lip}}
\newcommand{\LIP}{\operatorname{LIP}}
\newcommand{\ud}{\mathrm {d}}
\newcommand{\Cp}{\operatorname{Cap}}
\title{Path and quasihomotopy for Sobolev maps between manifolds}
\thanks{This research was conducted in the University of Jyv\"askyl\"a and at IMPAN, Warsaw.}
\author{Elefterios Soultanis}
\date{\today}
\address{ul. \'Sniadeckich 8,\newline
	\indent 00-656 Warszawa}
\email{elefterios.soultanis@gmail.com}%
\keywords{Function spaces, Sobolev mappings, Riemannian manifolds, Homotopy}%
\begin{document}

\begin{abstract}
	We study the relationship between quasihomotopy and path homotopy for Sobolev maps between manifolds. We employ singular integrals on manifolds to show that, in the critical exponent case, path homotopy implies quasihomotopy -- and observe the rather surprising fact that $n$-quasihomotopic maps need not be path homotopic. We also study the case where the target is an aspherical manifold, e.g. a manifold with nonpositive sectional curvature, and the contrasting case of the target being a sphere.
\end{abstract}

\maketitle

\section{Introduction}

Let $M$ and $N$ be compact Riemannian manifolds with $n=\dim M\ge 2$. The study of harmonic and $p$-harmonic maps between $M$ and $N$ naturally leads to questions about homotopies between finite energy Sobolev maps \cite{eel64, eel78, eel88, ver12, pig09}.

However classical homotopy is incompatible with Sobolev maps: on one hand Sobolev maps need not be continuous, and on the other classical homototopy classes are not stable under convergence in the Sobolev norm. Indeed, an easy example by B. White \cite{whi86} showed that the identity map $S^3\to S^3$ is homotopic to maps of arbitrarily small energy, whilst not being homotopic to a constant map.

F. Burstall, in \cite{bur84}, studied energy minimization within classes of maps with prescribed 1-homotopy class, and White \cite{whi86} introduced the notion of \emph{$d$-homotopy} for an integer $d\le n=\dim M$.

\medskip\noindent\emph{Two maps $u,v\in \Wem pMN$ are $d$-homotopic, $d<p$, if the restrictions of $u$ and $v$ to a $d$-skeleton of a generic triangulation of $M$ (which are continuous by the Sobolev embedding theorem) are classically homotopic.}

\medskip\noindent White proved \cite{whi86, whi88} that Sobolev maps $u\in \Wem pMN$ ($p\le n$) have a well defined $(\lfloor p\rfloor-1)$-homotopy type (i.e. the homotopy class of the restriction of $u$ does not depend on the generic $(\lfloor p\rfloor-1)$-dimensional skeleton) that is stable under weak convergence in $\Wem pMN$, and therefore well suited for variational minimization problems.

Connections of of $d$-homotopy with the topology of the Sobolev space $\Wem pMN$ are already visible in \cite{whi86}. The notion of \emph{path homotopy}, introduced by H. Brezis and Y. Li in \cite{bre01} utilizes this idea.

\medskip\noindent\emph{Two maps $u,v\in \Wem pMN$ ($1<p<\infty$) are path homotopic if there exists a continuous path $h\in C([0,1]; \Wem pMN)$ joining $u$ and $v$.} 

\medskip\noindent They proved \cite[Theorem 0.2]{bre01} that $\Wem pMN$ is always path connected when $1<p<2$, while a deep result of Hang and Lin \cite{han03} states that, for $1<p<n$, \emph{two maps $u,v\in \Wem pMN$ are path homotopic if and only if they are $(\lfloor p\rfloor-1)$-homotopic.}

When $p=n$ this equivalence does not remain valid. Instead, Sobolev maps $u\in \Wem nMN$ have well-defined homotopy classes (due to the density of Sobolev maps \cite{uhl83} and a result of White \cite{whi86}, see Theorem \ref{white}.) When $p>n$ the Sobolev embedding implies that Sobolev maps are continuous and indeed by results in Appendix A in \cite{bre01} path homotopy is equivalent to classical homotopy.

\bigskip\noindent With the emergence of analysis on metric spaces (see \cite{haj96, hei98, hei01, sha00} and the monographs \cite{bjo11,HKST07}) the study of energy minimization problems between more general spaces has become viable. The first steps in this direction were taken by N. Korevaar and R. Schoen \cite{kor93} -- who studied the existence of minimizers of 2-energy in homotopy classes of maps from a manifold to a nonpositively curved metric space (see \cite{bri99}) -- and J. Jost \cite{jost94,jost95,jost96,jost97} who studied the related problem of minimizing 2-energy in equivariance classes of maps from $(1,2)$-Poincar\'e space spaces to nonpositively curved metric spaces.

In the more general setting both $d$-homotopy and path homotopy become problematic. The lack of triangulations in metric spaces on the one hand, and the fact that the topology of Newton Sobolev spaces $\Nem pXY$  depends on the embedding of $Y$ into a Banach space (see \cite{haj07}) on the other, make both notions of homotopy difficult to work with. 

In \cite{teri1}, for the purpose of studying minimizers of $p$-energy in homotopy classes of maps from a $(1,p)$-Poincar\'e space to a nonpositively curved metric space a third notion, called \emph{$p$-quasihomotopy}, was introduced. Here we state the definition for manifolds. It is based on the known fact that Sobolev maps $u\in \Wem pMN$ have $p$-quasicontinuous representatives, i.e. \emph{for every $\varepsilon>0$ there is an open set $E\subset M$ with $\Cp_p(E)<\varepsilon$ so that $u|_{M\setminus E}$ is continuous.} Quasicontinuity may be seen as a refinement of the almost continuity of measurable maps.

\medskip\noindent\emph{Two quasicontinuous representatives $u,v\in \Wem pMN$ ($1<p<\infty$) are $p$-quasihomotopic if there is a map $H:M\times [0,1]\to N$ with the following property: for any $\varepsilon>0$ there is an open set $E\subset M$ with $\Cp_p(E)<\varepsilon$ so that $H|_{M\setminus E\times [0,1]}$ is a (continuous) homotopy between $u|_{M\setminus E}$ and $v|_{M\setminus E}$.}

\medskip\noindent Capacity is a much finer measure of smallness than the Lebesgue measure; a set $E\subset M$ of zero $p$-capacity has Hausdorff dimension at most $n-p$, and sets of small $p$-capacity have small Hausdorff content, \[ \Cp_p(E)\le c(n,p,q)\Ha^{n-q}_\infty(E)\textrm{ for any } 1<q<p   \] (Theorem 5.3 in \cite{mal03}.) Thus, while quasihomotopy allows for discontinuities, it does so in a sense a minimal amount, preserving \emph{some} amount of topology. For example, a set of zero $p$-capacity, $p>1$, does not separate a space, whereas a set of measure zero may. There is also a $p$-quasicontinuous counterpart to the fact that if the preimage of a point of a continuous function (from a connected space) is nonempty and open, then the function must be constant (see Lemma 5.3 in \cite{teri1}).


As such, $p$-quasihomotopy is a natural relaxation of classical homotopy to encompass Sobolev maps. Indeed, under the additional assumption that the target space has hyperbolic universal cover there always exists minimizers of $p$-energy in quasihomotopy classes in the metric setting, see Theorem 1.1. in \cite{teri2}.

\medskip\noindent When $p>n$ the fact any nonempty set has $p$-capacity $\ge \varepsilon_0$ for some small number $\varepsilon_0$ implies that $p$-quasihomotopy coincides with classical homotopy, and thus with path homotopy.

However when $1<p<n$ the notion of $p$-quasihomotopy turns out to differ from the other two. Theorem 1.4 in \cite{teri1} states that when $1<p<n$, if $u,v\in \Wem pMN$ are $p$-quasihomotopic then they are path homotopic. The proof in fact yields more: \emph{if $1<p\le n$ and $u,v\in \Wem pMN$ are $p$-quasihomotopic then $u$ and $v$ are $d$-homotopic, where $d=\lceil p\rceil-1$ is the largest integer $<p$.} Since $\lfloor p\rfloor-1<\lceil p\rceil-1$ unless $p$ is an integer it is expected that path homotopic maps need not be quasihomotopic. Indeed the constant map and \[ x\mapsto \frac{x}{|x|}\in \Wem p{B^2}{S^1},\ 1<p<2 \] are path homotopic but not $p$-quasihomotopic (see Section 4.2 in \cite{teri1}).

The first main theorem in this paper considers the remaining case $p=n$.
\begin{theorem}\label{main}
	Let $M$ and $N$ be smooth compact Riemannian manifolds, with $n=\dim M$. If two maps $f,g\in W^{1,n}(M;N)$ are path homotopic then they are $n$-quasihomotopic.
\end{theorem}

The relationships between path-, quasi-, and $d$-homotopy are summarized in the table below.

\begin{center}
	\begin{tabular}{|c | c l c |} 
		\hline
		$\Wem pMN$ & &  & \\ [1ex] 
		\hline 
		$1<p<n$	& $p$-quasihomotopy  $\Rightarrow$& $([p]-1)$-homotopy &$\Leftrightarrow$ path homotopy   \\ [1ex]
		\hline
		$p=n$ & path homotopy $\Rightarrow$&  $p$-quasihomotopy &$\Rightarrow$ $(n-1)$-homotopy \\ [1ex]
		\hline
		$p>n$ & $p$-quasihomotopy $\Leftrightarrow$& homotopy &$\Leftrightarrow$ path homotopy \\ [1ex]
		\hline
	\end{tabular}
\end{center}

Surprisingly, the converse of Theorem \ref{main} fails. Namely it can happen that two maps $f,g\in \Wem nMN$ are $n$-homotopic but not path homotopic. An example to this effect is given in Corollary \ref{sn}. It is noteworthy that in the example the target has the rational homology type of a sphere (in this case it is in fact a sphere) in light of the discussion in \cite{gol12} (see in particular Theorems 1.4 and 1.5 there). An $n$-manifold $M$ is a rational homology sphere if
\begin{align*}
H^k_{dR}(M)=\left\{
\begin{array}{ll}
0 &, k\ne 0,n \\
\Z &, k=0,\ k=n,
\end{array}
\right.
\end{align*}
where $H^k_{dR}(M)$ denotes the \emph{de Rham cohomology } of $M$.

\bigskip\noindent For generic manifolds $M,N$, particularly rational homology sphere targets, the implications between path- and quasihomotopy depend on $p$.

In contrast, for \emph{aspherical} target manifolds the situation is simpler. An $m$-manifold $N$ is apsherical if the homotopy groups $\pi_k(N)$ vanish for all $k\ge 2$. Using Whiteheads theorem (Theorem 4.5 in \cite{hat02}) aspherical manifolds may be characterized as those with contractible universal cover. Aspherical manifolds include, as an important subclass, manifolds of nonpositive sectional curvature.
	
For general $p\in (1,\infty)$ we have the following theorem.
\begin{theorem}\label{pqpath}
	Suppose $M$ and $N$ are compact smooth Riemannian manifolds, $N$ aspherical and $1<p<\infty$. If two maps $u,v\in \Wem pMN$ are $p$-quasihomotopic then they are path homotopic.
\end{theorem}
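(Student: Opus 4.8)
The plan is to reduce path homotopy to the algebraic condition that governs it in the aspherical case, and then feed that condition into the one-way implication we already have. Concretely, recall from the Hang--Lin dichotomy and White's work that for $1<p<n$ path homotopy of $u,v\in\Wem pMN$ is equivalent to $(\lceil p\rceil-1)$-homotopy (indeed to $(\lfloor p\rfloor -1)$-homotopy), while for $p=n$ path homotopy is strictly stronger than $(n-1)$-homotopy and for $p>n$ it is just classical homotopy. The point of asphericity is that all of these collapse: when $N$ is aspherical, a map from a simplicial complex is determined up to homotopy by its effect on $\pi_1$, so two maps that agree on the $1$-skeleton (up to homotopy) already agree on all skeleta up to homotopy, and in fact classical homotopy of continuous maps $M\to N$ is detected on the $1$-skeleton. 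So first I would record the elementary topological lemma: if $N$ is aspherical and $f_0,f_1\colon K\to N$ are continuous maps on a CW complex $K$ whose restrictions to $K^{(1)}$ are homotopic, then $f_0\simeq f_1$; this is a standard obstruction-theory argument since the obstructions to extending the homotopy over the $k$-cells lie in $\pi_k(N)=0$ for $k\ge 2$.

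Next I would translate this into the Sobolev category. We are given that $u$ and $v$ are $p$-quasihomotopic. By Theorem 1.4 in \cite{teri1} (stated in the excerpt), $p$-quasihomotopy implies $d$-homotopy with $d=\lceil p\rceil -1\ge 1$; in particular $u$ and $v$ are $1$-homotopic, i.e. their restrictions to a generic $1$-skeleton of a triangulation of $M$ are classically homotopic. Now I want to upgrade ``$1$-homotopic'' to ``path homotopic.'' For $1<p<n$ this is immediate from Hang--Lin: $1$-homotopy and $(\lceil p\rceil-1)$-homotopy are both equivalent to path homotopy once we know the target is aspherical, because the aspherical lemma shows the $d$-homotopy types for all $d$ coincide. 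Actually the cleanest route is: $p$-quasihomotopic $\Rightarrow$ $(\lceil p\rceil-1)$-homotopic $\Rightarrow$ (by asphericity, since $\lceil p\rceil -1\le n-1$ and homotopy on the $1$-skeleton already forces homotopy on the $(\lceil p\rceil-1)$-skeleton) the maps are $(\lfloor p\rfloor -1)$-homotopic as well $\Rightarrow$ (Hang--Lin) path homotopic. For $p\ge n$ one handles it directly: for $p>n$ the maps are continuous, $1$-homotopic forces classical homotopy by the aspherical lemma, and classical homotopy is path homotopy by Appendix A of \cite{bre01}; for $p=n$ one uses that $n$-quasihomotopic maps are $(n-1)$-homotopic together with the fact that on an aspherical target the $(n-1)$-homotopy class of a Sobolev map determines its full (White) homotopy class, which for $p=n$ coincides with path homotopy since the homotopy can be realized by smooth maps (density, \cite{uhl83}) and a path-connectedness argument in $W^{1,n}$.

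The main obstacle, and the step requiring the most care, is the $p=n$ case: there one cannot simply invoke Hang--Lin, and one must argue that for an aspherical target the \emph{genuine} homotopy class (in the sense of White, well-defined for $\Wem nMN$ maps) is both detected by the $(n-1)$-skeleton restriction and realizable by a continuous path in $\Wem nMN$. For detection, asphericity again does the work via obstruction theory on the generic $n$-complex $M$: the only possibly nontrivial obstruction to homotoping one map to another lives in $H^n(M;\pi_n(N))=0$. For realizability, one smooths $u$ and $v$ (possible by the Schoen--Uhlenbeck/Uhlenbeck density of smooth maps and the stability of the homotopy type under $W^{1,n}$-convergence), connects each to its smoothing by a short straight-line-type path that stays in a small $W^{1,n}$-ball, and then connects the two smooth maps along a smooth homotopy reparametrized to be $W^{1,n}$-continuous in the parameter. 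Finally, assembling the three regimes, Theorem \ref{pqpath} follows. I should also remark that the reverse implication (path homotopic $\Rightarrow$ $p$-quasihomotopic) is \emph{not} claimed here except at $p=n$ where it is Theorem \ref{main}; for $1<p<n$ it can fail, so the statement is genuinely one-directional and the aspherical hypothesis is what makes even this direction uniform in $p$.
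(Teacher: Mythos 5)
Your handling of $1<p<n$ and $p>n$ matches the paper's in substance, though for $p<n$ the paper simply cites \cite[Theorem 1.4]{teri1} directly and never invokes asphericity; your detour there is harmless but superfluous, since $(\lceil p\rceil-1)$-homotopy already implies $(\lfloor p\rfloor-1)$-homotopy by restriction to the smaller skeleton. The genuine divergence is at the critical exponent $p=n$. The paper's route: use the mollification Theorem \ref{molli} to replace $f,g$ by Lipschitz maps that are $n$-quasihomotopic to the originals, then invoke the diagonal-cover machinery of \cite{teri2} --- asphericity makes $N$ path representable, so the $n$-quasihomotopic Lipschitz pair $(f,g)\colon M\to N\times N$ admits a lift to $\widehat N_{diag}$ which one checks is Lipschitz (hence continuous), and by \cite[Proposition 3.2]{teri2} a continuous lift of $(f,g)$ forces $f\simeq g$ classically, whence path homotopy via Theorem \ref{white}. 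Your route instead passes through $(n-1)$-homotopy: the sharpened form of \cite[Theorem 1.4]{teri1} gives that $n$-quasihomotopic maps are $(n-1)$-homotopic; White's stability of the $(n-1)$-homotopy type under $W^{1,n}$-convergence transfers this to Lipschitz approximants $u_\varepsilon,v_\varepsilon$; your obstruction-theory lemma (the obstruction to extending a homotopy over $k$-cells lies in $\pi_k(N)=0$ for $k\ge 2$) then upgrades $(n-1)$-homotopy of the continuous approximants to classical homotopy; and Theorem \ref{white} plus density converts this to path homotopy of $u$ and $v$. Both routes are correct. Yours makes the role of asphericity completely transparent and elementary (pure obstruction theory) and bypasses the diagonal-cover construction of \cite{teri2} and the mollification machinery altogether; the paper's reuses Theorem \ref{molli} already built for Theorem \ref{main} and meshes with the fundamental-system-of-loops framework that drives Lemma \ref{pconj} later in the section. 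One small imprecision to fix: the ``short straight-line-type path'' joining $u$ to its smoothing need not lie in $N$ and should not be taken literally --- what you actually need is precisely the second clause of Theorem \ref{white} (stability of the path-homotopy class under small $W^{1,n}$ perturbations), which you should cite in its place.
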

	
When $p\ge 2$ we can say more.
\begin{theorem}\label{main2}
	Let $2\le p <\infty$, $M,N$ be smooth compact Riemannian manifolds, $N$ being aspherical. Then two maps $f,g\in \Wem pMN$ are path homotopic if and only they are $p$-quasihomotopic.
\end{theorem}
The restriction $p\ge 2$ is essential. Indeed by Theorem 0.2 in \cite{bre01} the space $\Wem pMN$ is always path connected when $1<p<2$, while there may exists distinct $p$-quasihomotopy classes (see the example above).

\subsubsection*{Outline}
The proof of Theorem \ref{main} is based on approximating a given Sobolev map with suitable mollified maps and showing the convergence is quasiuniform (Theorem \ref{molli}). The second section is devoted to mollification and the use of singular integrals to accomplish this. 
	
Section 3 deals with the aspherical case. For nonpositively curved targets Proposition \ref{pqpath} follows directly from Theorem 1.1 and Proposition 1.5 in \cite{teri1} but the more general case of aspherical targets requires somewhat different arguments and the use of Theorem \ref{molli}. Theorem \ref{main2} is an immediate consequence of Theorem \ref{main3}, presented in this Section. 

The last Section is devoted to proving that $\Wem pM{S^k}$ is $p$-quasiconnected, i.e. any two maps in $\Wem pM{S^k}$ are $p$-quasihomotopic,  when $p\le k$ (Proposition \ref{mn}). Some of the auxiliary results  (e.g. Proposition \ref{cap}) may be interesting in themselves. Proposition \ref{mn} serves as an example showing that sometimes -- though not in general -- path homotopy - and $p$-quasihomotopyclasses coincide.

The paper is closed by remarking that $\Wem p{B^{k+1}}{S^k}$, while path connected when $p<k+1$, is not $p$-quasiconnected for $k<p<k+1$.

\section{Critical exponent case}
The proof strategy of Theorem \ref{main} utilizes Brian White's result. 

\begin{theorem}[\cite{whi86}, Theorem 0 and \cite{bre03}, Theorem 2]\label{white}
	Two Lipschitz maps in \newline $\Wem nMN$ are path homotopic if and only if they are homotopic. Moreover for each $u\in \Wem nMN$ there is a number $\varepsilon >0$ so that if $\|u-v\|_{1,n}<\varepsilon$ then $u$ and $v$ are path homotopic. 
\end{theorem}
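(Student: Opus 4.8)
The plan is to derive both assertions from one local statement: \emph{for every $u\in\Wem nMN$ there is $\varepsilon>0$ such that $\|u-v\|_{1,n}<\varepsilon$ forces $u$ and $v$ to be path homotopic.} Granting this, the ``moreover'' clause is immediate, and the equivalence follows as well. For the easy direction, if Lipschitz maps $f,g$ are classically homotopic I would approximate a continuous homotopy $F$ between them by a smooth homotopy $G$ (embed $N$ in a Euclidean space, mollify in the $M$-variable, project back onto $N$); then $t\mapsto G_t$ is a continuous path in $\Wem nMN$ since $G$ and $\nabla_xG$ are uniformly continuous on the compact $M\times[0,1]$, and the smooth endpoints $G_0,G_1$ are joined to $f,g$ through short geodesic homotopies (also continuous in $\Wem nMN$). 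For the converse I would use that, by the local statement, nearby smooth maps (which are dense by \cite{uhl83}) have a common classical homotopy type; this defines a locally constant function on $\Wem nMN$, hence constant along any continuous $\Wem nMN$-path, and at the Lipschitz endpoints it agrees with the genuine homotopy type.

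The core is therefore the local statement. Embed $N$ isometrically into $\R^L$, let $N_\delta$ be a tubular neighbourhood and $\Pi\colon N_\delta\to N$ the smooth nearest point retraction. The decisive point is that $p=n$ is the critical exponent, so $\Wem nMN\subset VMO(M;N)$: the mean oscillations
\[ \omega_u(x,\rho):=\dashint_{B(x,\rho)}\Big|u-\dashint_{B(x,\rho)}u\Big| \]
tend to $0$ as $\rho\to0$, uniformly in $x$. Mollifying $u$ componentwise in $\R^L$ gives smooth $u_\rho\colon M\to\R^L$ with $\dist(u_\rho(x),N)\le C\,\omega_u(x,C\rho)$, since $u(y)\in N$ for a.e.\ $y$. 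Hence there is $\rho_0=\rho_0(u)>0$ with $\dist(u_\rho(x),N)<\delta/2$ for all $x$ and all $\rho\le\rho_0$, so $\Pi\circ u_\rho$ is a well-defined smooth map $M\to N$; moreover $u_\rho\to u$ in $\Wem nM{\R^L}$, and the chain rule together with dominated convergence (using smoothness of $\Pi$ on $N_\delta$) shows that $\rho\mapsto\Pi\circ u_\rho$ is a continuous path in $\Wem nMN$ on $[0,\rho_0]$ joining $u$ at $\rho=0$ to the smooth map $\Pi\circ u_{\rho_0}$.

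Next I would establish the uniformity that lets one scale $\rho_0$ serve $u$ and all its neighbours. The Poincar\'e inequality bounds mean oscillation by energy on the ball in a scale-invariant way, $\omega_w(x,\rho)\le C\big(\int_{B(x,\rho)}|\nabla w|^n\big)^{1/n}\le C\|\nabla w\|_{L^n(M)}$ (this is the $W^{1,n}\hookrightarrow BMO$ estimate), so $\omega_v(x,\rho)\le\omega_u(x,\rho)+C\|u-v\|_{1,n}$. Shrinking $\varepsilon=\varepsilon(u)$ so that $C\varepsilon<\delta/4$, the same $\rho_0$ then works for every $v$ with $\|u-v\|_{1,n}<\varepsilon$: $\Pi\circ v_{\rho_0}$ is a well-defined smooth map joined to $v$ by $\rho\mapsto\Pi\circ v_\rho$, $\rho\in[0,\rho_0]$. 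At the fixed scale $\rho_0$ one also has $\|u_{\rho_0}-v_{\rho_0}\|_{L^\infty}\le C_{\rho_0}\|u-v\|_{L^n}$ (a standard mollifier estimate), so after shrinking $\varepsilon$ once more, $\Pi\circ u_{\rho_0}$ and $\Pi\circ v_{\rho_0}$ lie uniformly within the injectivity radius of $N$ and are joined by the geodesic homotopy, again a continuous path in $\Wem nMN$. Concatenating gives $v\sim\Pi v_{\rho_0}\sim\Pi u_{\rho_0}\sim u$ in $\Wem nMN$.

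The step I expect to be the main obstacle is securing a single scale $\rho_0$ valid simultaneously for $u$ and for all nearby $v$ — that is, the uniform control of the $VMO$ modulus under small $\Wem nMN$-perturbations; this is exactly where the criticality $p=n$ is essential, and the argument collapses for $p<n$, where $W^{1,p}\not\subset VMO$. A secondary difficulty is the careful verification that $\rho\mapsto\Pi\circ u_\rho$ is continuous into $\Wem nMN$ all the way down to $\rho=0$, and the bookkeeping in the converse direction, where one must check that the explicit chains glue along a $\Wem nMN$-path into a genuine classical homotopy between the smoothings of the (continuous) endpoints.
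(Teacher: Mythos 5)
The paper does not actually prove this theorem: it is stated as a citation of \cite{whi86}, Theorem~0 and \cite{bre03}, Theorem~2, and no argument follows. So there is no in-text proof to compare against, and the right question is whether your reconstruction matches the cited sources. It does: the scheme you describe — embed $N$ in $\R^L$, mollify and retract via the tubular-neighbourhood projection $\Pi$, exploit that $W^{1,n}\hookrightarrow BMO$ with scale-invariant Poincar\'e control of the oscillation so that one mollification scale $\rho_0$ serves a whole $W^{1,n}$-ball, and join $\Pi\circ u_{\rho_0}$ to $\Pi\circ v_{\rho_0}$ by the geodesic homotopy — is precisely the VMO/critical-exponent argument of Brezis--Nirenberg that the paper refers to; White's Theorem~0 is its homotopy-class side. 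The key estimate $\omega_v(x,\rho)\le\omega_u(x,\rho)+C\|\nabla(u-v)\|_{L^n}$ is correct, coming from Poincar\'e applied to $u-v$ (not from averaging $|u-v|$ directly), and the step $\dist(u_\rho(x),N)\lesssim\omega_u(x,C\rho)$ is the Schoen--Uhlenbeck observation, which in this paper is Lemma~\ref{0}.

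Two points worth tightening in the ``converse'' direction. First, the locally constant function is the classical homotopy class of $\Pi\circ u_\rho$ for small $\rho$; its local constancy on $\Wem nMN$ follows from the geodesic ($C^0$-close) homotopy between $\Pi\circ u_{\rho_0}$ and $\Pi\circ v_{\rho_0}$, \emph{not} from the path homotopy asserted by the local statement — if you invoke the latter the argument becomes circular, since path homotopy of smooth maps is exactly what you want to upgrade to classical homotopy. You should also verify that the class is independent of $\rho\in(0,\rho_0]$, which uses that $\rho\mapsto u_\rho$ is continuous into $C^0(M;\R^L)$ on any compact subinterval of $(0,\rho_0]$. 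Second, at a Lipschitz endpoint $f$ the identification of this class with the genuine homotopy class of $f$ requires $\Pi\circ f_\rho\to f$ uniformly, which holds because $f$ is uniformly continuous; this is where the Lipschitz hypothesis is used, as for general $u\in\Wem nMN$ no such $C^0$ convergence is available.
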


Coupled with the fact, due to Schoen-Uhlenbeck \cite{uhl83}, that $Lip(M;N)$ is dense in $\Wem nMN$ the question, whether path homotopy implies $n$-quasihomotopy, is reduced to the following statement. \emph{For every $u\in \Wem nMN$ and $\varepsilon>0$ there is a Lipschitz map $u_\varepsilon$ with $\|u-u_\varepsilon\|_{1,n}<\varepsilon$ such that $u_\varepsilon$ is $n$-quasihomotopic to $u$.}

We will construct such functions by means of mollifying the original function.

\subsection{Mollifiers} Suppose $\psi:[0,\infty)\to [0,1]$ is a Lipschitz cut-off function with $\spt\psi\subset [0,1)$. 
 Given $r>0$ define $\psi_r:M\to \R$ by $$\psi_r(p)=\int_M\psi\left(\frac{|p-z|}{r}\right)\ud z.$$ 
\begin{definition}
	Given $u\in L^p(M;\R^\nu)$ and $r>0$ set \[ \psi_r\ast u(p)=\frac{1}{\psi_r(p)}\int_M\psi\left(\frac{|p-z|}{r}\right)u(z)\ud z, \ p\in M \] 
\end{definition}
\begin{lemma}\label{lipmol}
	For each $u\in L^1_{loc}(M;\R^\nu)$ and  $r>0$ the map $\psi_r\ast u:M\to \R^\nu$ is Lipschitz continuous. Moreover $$\psi_r\ast u (x)\le C r\M |u| (x)$$ for almost every $x\in M$, with $C$, depending only on $\psi,\ M$ and $\nu$. 
\end{lemma}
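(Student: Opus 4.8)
The plan is to prove the two assertions separately; both reduce to the structure of the defining quotient together with standard volume comparison on a compact Riemannian manifold. Write $\psi_r\ast u=N_u/\psi_r$, where $N_u(p):=\int_M\psi(|p-z|/r)u(z)\,\ud z$ and $\psi_r$ is the same integral with $u$ replaced by the constant $1$. Since $M$ is compact, $u\in L^1(M;\R^\nu)$ and both functions are finite on $M$. For fixed $z$ the function $p\mapsto|p-z|$ is $1$-Lipschitz (triangle inequality, whether $|\cdot|$ denotes the Riemannian distance or an ambient chordal distance), hence $p\mapsto\psi(|p-z|/r)$ is $(\Lip\psi)/r$-Lipschitz uniformly in $z$; integrating against $u$, resp.\ against $1$, shows $N_u$ and $\psi_r$ are Lipschitz on $M$, with constants at most $\tfrac{\Lip\psi}{r}\|u\|_{L^1(M)}$ and $\tfrac{\Lip\psi}{r}\leb(M)$.

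To pass to the quotient I need $\psi_r$ bounded away from zero on $M$. Being a cut-off function, $\psi$ equals $1$ on some interval $[0,s_0]$ with $s_0\in(0,1)$, so $\psi_r(p)\ge\leb\bigl(B(p,s_0 r)\cap M\bigr)$, which is strictly positive for every $p\in M$ and, by compactness, bounded below by a positive constant (depending on $r$, $\psi$, $M$) uniformly in $p$. Since the quotient of two Lipschitz functions on a compact set whose denominator is pinched between two positive constants is again Lipschitz -- here $|N_u|\le\|u\|_{L^1(M)}$ and $\psi_r\le\leb(M)$ supply the needed sup-bounds -- it follows that $\psi_r\ast u\in\Lip(M;\R^\nu)$, which is the first assertion. (Note the Lipschitz constant obtained in this way degenerates as $r\to 0$, which is harmless for the claim.)

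For the pointwise bound, Jensen's inequality gives $|\psi_r\ast u(x)|\le\psi_r\ast|u|(x)$, so it suffices to estimate the scalar quantity on the right, and this is where the constant acquires its mild dependence on $\nu$. Since $\spt\psi\subset[0,1)$, the integral defining $N_{|u|}(x)$ extends only over $B(x,r)\cap M$, whence $N_{|u|}(x)\le\int_{B(x,r)}|u|\le\leb(B(x,r))\,\M|u|(x)$. Dividing by $\psi_r(x)\ge\leb(B(x,s_0 r)\cap M)$ and invoking the volume doubling of the compact manifold $M$ to bound $\leb(B(x,r))$ by a fixed multiple of $\leb(B(x,s_0 r)\cap M)$, uniformly in $x$, yields $\psi_r\ast u(x)\le C\,\M|u|(x)$ for a.e.\ $x$, with $C$ depending only on $\psi$, $M$ and $\nu$. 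The only step that is not purely formal is this last uniform volume comparison -- the small-ball volume asymptotics together with the doubling property on a compact Riemannian manifold -- and this is the main, and rather mild, obstacle; everything else is bookkeeping with the quotient and the triangle inequality.
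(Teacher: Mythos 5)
Your Lipschitz argument is sound and is only a mild variant of the paper's: you establish a \emph{global} Lipschitz bound on the numerator $N_u$ and the denominator $\psi_r$ using $\|u\|_{L^1(M)}$ and $\leb(M)$, together with a lower bound on $\psi_r$ and the quotient rule, whereas the paper estimates the difference $\psi_r\ast u(x)-\psi_r\ast u(y)$ directly for $d(x,y)<r$, keeping the integrals \emph{local} (over $B(x,r+d(x,y))$) and invoking doubling. Both routes are valid; yours gives a cruder constant (blowing up as $r\to0$ and as $\|u\|_{L^1}$ grows), which, as you correctly note, is harmless for the qualitative conclusion.

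On the pointwise estimate there is a discrepancy you should be aware of: you prove $\psi_r\ast u(x)\le C\,\M|u|(x)$, while the lemma as printed asserts $\psi_r\ast u(x)\le Cr\,\M|u|(x)$. Your bound is in fact the correct one for the \emph{normalized} averaging kernel $\psi_r\ast$: taking $u\equiv 1$ gives $\psi_r\ast u\equiv 1$ and $\M|u|\equiv 1$, so the printed inequality would force $1\le Cr$, impossible for small $r$ with $C$ independent of $r$. The extra factor of $r$ belongs to the \emph{singular} kernel $k_r(t)=\varphi(t/r)/t^{n-1}$ introduced in the next subsection, for which the ``decomposition into annular regions'' cited in the paper's proof genuinely produces $|k_r\ast g(x)|\lesssim r\,\M g(x)$ (the $j$-th shell contributes $\simeq (2^{-j}r)^{1-n}\cdot(2^{-j}r)^n\,\M g=2^{-j}r\,\M g$); this is also how the estimate is later invoked in the proof of Lemma~\ref{unibound}. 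So your proof is correct, but it proves (as it must) the $r$-free bound; the statement's $r$ appears to be a slip carried over from the $k_r$ setting, and one should not expect to recover it for $\psi_r\ast$.
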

\begin{proof}
For $g\in L^1_{loc}(M)$ and arbitrary $x,y\in M$ we have 
\begin{align}
&\left|\int_M\psi\left(\frac{|x-z|}{r}\right)g(z)\ud z-\int_M\psi\left(\frac{|y-z|}{r}\right)g(z)\ud z\right|\nonumber \\
\le & \Lip(\psi) \int_{B(x,r+d(x,y))}\left|\frac{|x-z|-|y-z|}{r}\right||g(z)|\ud z\nonumber\\
\le & \Lip(\psi)\frac{d(x,y)}{r}\int_{B(x,r+d(x,y)) }|g|\ud z.\label{apu}
\end{align}
The lipschitz continuity of $\psi_r\ast u$ follows from this by expressing the difference $\psi_r\ast u(x)-\psi_r\ast u(y)$ ,where $d(x,y)<r$, as \begin{align*}
\frac{\psi_r(y)-\psi_r(x)}{\psi_r(x)\psi_r(y)}&\int_M\psi\left(\frac{|x-z|}{r}\right)u(z)\ud z \\
+\frac{1}{\psi_r(y)}&\left[\int_M\psi\left(\frac{|x-z|}{r}\right)u(z)\ud z-\int_M\psi\left(\frac{|y-z|}{r}\right)u(z)\ud z \right]
\end{align*}
and applying (\ref{apu}) and the doubling property of the measure.

The estimate in the claim follows by a standard decomposition of the integral into annular regions, see \cite{HKST07, hei01}.
\end{proof}

\begin{lemma}\label{0}(Schoen-Uhlenbeck)
	Let $u\in \Wem pMN$. For $r>0$ we have \[ \dist(N,\varphi_r\ast u(p))\lesssim \left(\int_{B_r(p)}|Du|^n\ud z\right)^{1/n} \] for all $p\in M$. Consequently for each $u\in \Wem nMN$ there is $r_0>0$ so that $$\sup_{p\in M}\dist(N,\varphi_r\ast u(p))<\varepsilon_0$$ whenever $r<r_0$.
\end{lemma}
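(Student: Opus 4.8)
The plan is to exploit that $u$ takes values in $N$ almost everywhere: since $\varphi_r\ast u(p)$ is a weighted average of the values $u(z)$, $z\in B_r(p)$, each lying in $N$, the point $\varphi_r\ast u(p)$ can be far from $N$ only to the extent that $u$ oscillates on $B_r(p)$, and this oscillation is controlled by the gradient through a Poincar\'e inequality on the small ball $B_r(p)$.

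First I would record the elementary bound
\[ \dist(N,\varphi_r\ast u(p))\le |\varphi_r\ast u(p)-u(z)|\quad\text{for a.e. }z\in B_r(p), \]
valid because $u(z)\in N$ for a.e.\ $z$; averaging over $z\in B_r(p)$ gives $\dist(N,\varphi_r\ast u(p))\le \dashint_{B_r(p)}|\varphi_r\ast u(p)-u(z)|\,\ud z$. Next, using $0\le\varphi\le 1$, $\spt\varphi\subset[0,1)$, and the lower bound $\varphi_r(p)\gtrsim r^n$ (valid for small $r$ since $\varphi$ is bounded below near $0$ and $|B_r(p)|\approx r^n$), I would estimate, for a.e.\ $z\in B_r(p)$,
\[ |\varphi_r\ast u(p)-u(z)|=\frac{1}{\varphi_r(p)}\left|\int_M\varphi\!\left(\tfrac{|p-w|}{r}\right)\big(u(w)-u(z)\big)\,\ud w\right|\lesssim \dashint_{B_r(p)}|u(w)-u(z)|\,\ud w. \]
Combining the two displays and writing $u_{B_r(p)}$ for the average of $u$ over $B_r(p)$,
\[ \dist(N,\varphi_r\ast u(p))\lesssim\dashint_{B_r(p)}\dashint_{B_r(p)}|u(w)-u(z)|\,\ud w\,\ud z\le 2\dashint_{B_r(p)}|u-u_{B_r(p)}|\,\ud z. \]

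Then I would invoke the Poincar\'e inequality on balls of radius $\le r_0$ in the compact manifold $M$, together with H\"older's inequality, to get $\dashint_{B_r(p)}|u-u_{B_r(p)}|\,\ud z\lesssim r\big(\dashint_{B_r(p)}|Du|^n\,\ud z\big)^{1/n}$; since $\dashint_{B_r(p)}|Du|^n\,\ud z\approx r^{-n}\int_{B_r(p)}|Du|^n\,\ud z$, the factor $r$ cancels and the claimed pointwise estimate follows. All quantities here are continuous in $p$ — the left side by Lemma \ref{lipmol}, and $p\mapsto\int_{B_r(p)}|Du|^n\,\ud z$ because $\mathbf{1}_{B_r(p)}$ varies continuously with $p$ in $L^1(M)$ — so it suffices that the inequality hold for a.e.\ $p$. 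For the final clause, given $\varepsilon_0>0$, absolute continuity of the integral of $|Du|^n\in L^1(M)$ provides $\delta>0$ with $\int_E|Du|^n\,\ud z<\varepsilon_0^n/C$ whenever $|E|<\delta$; by compactness of $M$ there is $r_0>0$ with $|B_r(p)|<\delta$ for all $p\in M$ and $r<r_0$, and then $\sup_{p\in M}\dist(N,\varphi_r\ast u(p))<\varepsilon_0$. The only points needing a word of justification are the lower bound $\varphi_r(p)\gtrsim r^n$ and the localized Poincar\'e inequality on $M$; both are standard for compact Riemannian manifolds, since sufficiently small geodesic balls are bi-Lipschitz to Euclidean balls, so I do not anticipate a genuine obstacle — this lemma is essentially bookkeeping built on the classical Schoen--Uhlenbeck argument.
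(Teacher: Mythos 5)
Your proposal is correct and follows essentially the same route as the paper: bound $\dist(N,\varphi_r\ast u(p))$ by the oscillation $\dashint\|u(z)-\varphi_r\ast u(p)\|$, pass to a double average of $\|u(w)-u(z)\|$, and apply the $(1,n)$-Poincar\'e inequality so the factor $r$ cancels against the normalized integral, with absolute continuity of $|Du|^n\,\ud z$ giving the second assertion. The extra remarks you add (the lower bound $\varphi_r(p)\gtrsim r^n$, H\"older, and the continuity-in-$p$ observation) are all implicit in the paper's argument, which holds for every $p$ directly.
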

\begin{proof}
	Let $p\in M$. For a.e. $z\in B_r(p)$ $$\dist(N,\varphi_r\ast u(p))\le \|u(z)-\varphi_r\ast u(p)\|.$$ Taking an average integral over $B_r(p)$ we obtain $$\dist(N,\varphi_r\ast u(p))\le \dashint_{B_r(p)}\|u(z)-\varphi_r\ast u(p)\|\ud z. $$ By the $(1,n)$-Poincare inequality (which every manifold of dimension $n$ supports)
	\begin{align*}
	\dashint_{B_r(p)}\|u(z)-\varphi_r\ast u(p)\|\ud z&\le \frac{1}{\varphi_r(p)}\int_{B_r(p)}\dashint_{B_r(p)}\varphi\left(\frac{|p-w|}{r}\right)\|u(z)-u(w)\|\ud z\ud w\\
	&\lesssim \dashint_{B_r(p)}\dashint_{B_r(p)}\|u(z)-u(w)\|\ud z\ud w\\
	&\lesssim r\left(\dashint_{B_r(p)}|Du|^n\ud z\right)^{1/n}\simeq \left(\int_{B_r(p)}|Du|^n\ud z\right)^{1/n}.
	\end{align*}
	The implied constants in the estimates depend only on the data of $M$ and on $N$. The second assertion follows directly from the absolute continuity of the measure $|Du|\ud z$.
\end{proof}

\subsection{Singular integrals} Let us set some notation. Let $\varphi:\R\to \R$ be a smooth cutoff function and define the \emph{ kernel } $k_r:(0,\infty)\to \R$, $$k_r(t)=\frac{\varphi(t/r)}{t^{n-1}}.$$ We abuse notation by writing $$k_r(p,q)=k_r(|p-q|),\ p,q\in M$$ and finally, given $g\in L^p(M)$ ($1<p<\infty$), we define the convolution $$k_r\ast g (x)=\int_Mk_r(x,z)g(z)\ud z, \ x\in M.$$ By the compactness of $M$ there exists $r_1$ so that $$\exp_x:B^n(r_1)\to B(x,r_1) $$ is a 2-bilipschitz diffeomorphism for all $x\in M$. Thus, when $r<r_1$ we may use a change of variables given by the exponential map and write the integral above $$k_r\ast g (x)=\int_{B^n(r)}k_r(|\xi|) g(\exp_x\xi)J\exp_x\xi \ud\xi.$$

\begin{lemma}
	Let $1<p<\infty$. Given $g\in L^p(M)$ the function $k_r\ast g$ has distributional gradient 
	\[\nabla _x(k_r\ast g)v= -PV \int_M k_r'(|x-z|)\langle \nabla_xd_z,v\rangle g(z)\ud z,\ v\in T_xM. \]
\end{lemma}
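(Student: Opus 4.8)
The plan is to differentiate under the integral sign, taking care that the kernel $k_r$ is only mildly singular. Write $k_r\ast g(x)=\int_M k_r(|x-z|)g(z)\,\ud z$ and recall that, since $\varphi$ is a smooth cutoff, $k_r(t)=\varphi(t/r)t^{1-n}$ is smooth away from $t=0$ with $k_r'(t)=\varphi'(t/r)r^{-1}t^{1-n}-(n-1)\varphi(t/r)t^{-n}$; in particular $|k_r(t)|\lesssim t^{1-n}$ and $|k_r'(t)|\lesssim t^{-n}$. The strategy is to first establish the formula when $g$ is smooth (or Lipschitz) and then pass to general $g\in L^p(M)$ by density, using that both sides depend continuously on $g$ in a suitable sense.

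For the smooth case I would proceed as follows. Fix $x\in M$ and a unit tangent vector $v\in T_xM$. The function $z\mapsto d_z(x):=d(x,z)$ is smooth away from $x$ and away from the cut locus, with $\nabla_x d_z$ the unit vector at $x$ pointing away from $z$; since $k_r$ is supported in $\{t<r\}$ and $r<r_1$, only the region $B(x,r)$ (where $\exp_x$ is a bilipschitz diffeomorphism, hence no cut locus issues) matters. For $\delta>0$ let $I_\delta(y)=\int_{M\setminus B(x,\delta)}k_r(|y-z|)g(z)\,\ud z$; away from the singularity one may differentiate under the integral to get $\nabla_y I_\delta(y)\cdot v=\int_{M\setminus B(x,\delta)}k_r'(|y-z|)\langle\nabla_y d_z,v\rangle g(z)\,\ud z$. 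The difference $k_r\ast g-I_\delta$ is $\int_{B(x,\delta)}k_r(|x-z|)g(z)\,\ud z$, which is $O(\delta)$ since $\int_{B(x,\delta)}|z-x|^{1-n}\,\ud z\lesssim\delta$, and one checks the gradients converge as $\delta\to0$; the principal value appears precisely because the integrand $k_r'(|x-z|)\langle\nabla_x d_z,v\rangle$ is only in weak-$L^1$ near $x$ (it behaves like $|z-x|^{-n}$ times an odd factor), but the antisymmetry of $\langle\nabla_x d_z,v\rangle$ under $z\mapsto$ (reflection through $x$ in normal coordinates) makes the symmetric limit exist. To verify the distributional identity rigorously I would instead test against a smooth vector field: integrate $k_r\ast g$ against $\operatorname{div}$ of a test field, use Fubini to move the derivative onto $z\mapsto k_r(|x-z|)$, and then recognize the resulting inner integral as the principal value above — this avoids having to justify pointwise differentiation and directly yields the claimed weak gradient.

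For the passage to general $g\in L^p(M)$: the map $g\mapsto k_r\ast g$ is bounded $L^p(M)\to L^p(M)$ (indeed into $W^{1,p}$, which is part of the point of the next lemmas), and the right-hand side, as a principal-value singular integral operator with a Calderón–Zygmund-type kernel $k_r'(|x-z|)\langle\nabla_x d_z,v\rangle$ of order $-n$ with the requisite cancellation, is likewise bounded on $L^p$ for $1<p<\infty$ by standard Calderón–Zygmund theory on the doubling space $M$. Since smooth functions are dense in $L^p(M)$ and the identity holds on that dense class, it extends to all of $L^p$.

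The main obstacle I anticipate is the $L^p$-boundedness and well-definedness of the principal-value operator on the right: one must confirm the kernel $K(x,z)=k_r'(|x-z|)\langle\nabla_x d_z,v\rangle$ genuinely satisfies the size and Hörmander regularity conditions of a Calderón–Zygmund kernel on $(M,d,\ud z)$ — the size bound $|K(x,z)|\lesssim d(x,z)^{-n}$ is immediate, but the smoothness/cancellation requires using that $z\mapsto\nabla_x d_z$ varies smoothly on $B(x,r)\setminus\{x\}$ with the expected derivative bounds, which follows from comparison with the Euclidean case via the bilipschitz exponential chart but needs to be stated carefully. The cancellation that makes the principal value converge is the local oddness of $\langle\nabla_x d_z,v\rangle$, inherited from the Euclidean $z\mapsto (x-z)/|x-z|$ under the exponential map up to lower-order (integrable) errors.
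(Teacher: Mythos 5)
Your ``rigorous'' route (test $k_r\ast g$ against $\operatorname{div}V$, Fubini, then recover the principal value) is exactly the paper's strategy, but the proposal elides the step that actually carries the proof. After Fubini the inner integral is over $x$ with $z$ fixed, and ``moving the derivative onto the kernel'' means applying the divergence theorem on the truncated domain $M\setminus B_\delta(z)$. This produces a surface term
\[
\int_{\partial B_\delta(z)} k_r(|y-z|)\langle \nabla_y d_z, V_y\rangle\,\ud\sigma(y),
\]
and the naive size estimate $|k_r(\delta)|\cdot |\partial B_\delta(z)|\sim \delta^{1-n}\cdot\delta^{n-1}=O(1)$ is \emph{not} enough to make it vanish. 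The paper dispatches it by pulling out $k_r(\delta)$ (the kernel is constant on $\partial B_\delta(z)$) and applying the divergence theorem a second time, $\int_{\partial B_\delta(z)}\langle\nabla_y d_z,V_y\rangle\,\ud\sigma=\int_{B_\delta(z)}\operatorname{div}V$, which is $O(\delta^n)$; only then is the whole boundary term $O(\delta^{1-n}\cdot\delta^n)=O(\delta)$. This cancellation, not merely ``odd kernel, hence PV exists,'' is what justifies taking $\delta\to 0$, and your proposal does not anticipate it.

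Two smaller points. First, the reduction to smooth $g$ followed by density and Calder\'on--Zygmund boundedness of the PV operator is superfluous: the divergence-theorem computation above is entirely at the level of $z$-by-$z$ integration by parts and works directly for any $g\in L^p(M)$ after one application of Fubini and dominated convergence, which is what the paper does. Second, if you do insist on the density route, you are implicitly using the $L^p$-boundedness of the PV operator $T_j$, which in the paper is established \emph{after} this lemma (as the Sublemma in the proof of the uniform $W^{1,p}$ bound); this is not circular if one reorders, since that boundedness proof does not use the distributional-gradient identity, but it does import machinery that the direct argument avoids.
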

\begin{proof}
	We refer to \cite{see59, cal56} for the existence and basic properties of singular integrals on manifolds (see in particular Chapter IV in \cite{koh57} and the example in \cite[D]{see59}.)
	
	The distributional derivative is determined by the condition \[ \int_M\langle \nabla(k_r\ast g),V\rangle \ud x=-\int_M(k_r\ast g)div V\ud x \] for all smooth vector fields $V$ on $M$. We may write 
	\begin{align}
	&\int_M PV \int_M k'_r(|x-z|)\langle \nabla_xd_z,V_x\rangle g(z)\ud z\ud x\nonumber \\
	=&\lim_{\delta\to 0}\int_M \int_{M\setminus B_\delta(x)} k'_r(|x-z|)\langle \nabla_xd_z,V_x\rangle g(z)\ud z\ud x\nonumber \\
	=&\lim_{\delta\to 0}\int_Mg(z)\int_{M\setminus B_\delta(z)}k'_r(|x-z|)\langle \nabla_xd_z,V_x\rangle \ud x\ud z.\label{pv}
	\end{align}
	
	Note that when $x\ne z$ the vector $\nabla_xd_z$ is the unit vector normal to $\partial B_\delta (z)$ at $x$. Thus $-\nabla_xd_z$ is the unit normal to $\partial(M\setminus B_\delta(z))$ at $x$. The divergence theorem gives
	\begin{align}
	&\int_{M\setminus B_\delta(z)}k'_r(|x-z|)\langle \nabla_xd_z,V_x\rangle \ud x \nonumber\\
	=&-\int_{M\setminus B_\delta(z)}k_r(|x-z|)div V_x\ud x+ \int_{\partial B_\delta(z)}k_r(|y-z|)\langle \nabla_yd_z,V_y\rangle \ud\sigma(y)\nonumber \\
	=&-\int_{M\setminus B_\delta(z)}k_r(|x-z|)div V_x\ud x+O(\delta)\label{o}.
	\end{align}
	The second term is $O(\delta)$ since it may be estimated using again the divergence theorem: \[ \left|\int_{\partial B_\delta(z)}k_r(|y-z|)\langle \nabla_yd_z,V_y\rangle \ud\sigma(y)\right|=\left|k_r(\delta)\int_{B_\delta(z)}div V_y\ud y \right| \lesssim \delta^{1-n} \delta^n.\]
	
	Plugging (\ref{o}) in (\ref{pv}) we obtain
	\begin{align}
	&\int_M PV \int_M k'_r(|x-z|)\langle \nabla_xd_z,V_x\rangle g(z)\ud z\ud x\nonumber\\
	=& -\lim_{\delta\to 0}\int_M g(z)\int_{M\setminus B_\delta(z)}k_r(|x-z|)div V_x\ud x \ud z + \lim_{\delta\to 0}\int_MO(\delta)\ud z\nonumber\\
	=&- \lim_{\delta\to 0}\int_M \int_{M\setminus B_\delta(x)}k_r(|x-z|)g(z)div V_x\ud z \ud x = -\int_M(k_r\ast g)divV\ud x.
	\end{align}
	Thus we are done.
\end{proof}
\begin{lemma}\label{unibound}
	The operators $g\mapsto k_r\ast g$ ($r>0$) are uniformly bounded  $$L^p(M)\to \W pM, $$ i.e. 
	\begin{equation}\label{bound}
	\int_M|k_r\ast g(x)|^p\ud x+\int_M|\nabla_x(k_r\ast g)|^p\ud x\le C\int_M|g|^p\ud x,
	\end{equation} $g\in L^p(M)$, for all $0<r<r_1$.
\end{lemma}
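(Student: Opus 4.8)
The plan is to read $\nabla(k_r\ast g)$ off from the preceding lemma, split it into a genuinely singular part and an integrable remainder, and bound each on $L^p(M)$ with constants independent of $r\in(0,r_1)$; the bound on $k_r\ast g$ itself is elementary. For the latter, since $\varphi$ is supported near the origin, $k_r(x,\cdot)$ is supported in $B(x,cr)$, and passing to normal coordinates via $\exp_x$ (where $J\exp_x$ is comparable to $1$) gives
\[
\sup_{x\in M}\int_M|k_r(x,z)|\,\ud z\ \lesssim\ \int_{B^n(cr)}\frac{\ud\xi}{|\xi|^{n-1}}\ \simeq\ r\ \le\ r_1 ,
\]
and by symmetry the same bound holds with the roles of $x$ and $z$ interchanged. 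Schur's test then yields $\|k_r\ast g\|_{L^p(M)}\lesssim r_1\|g\|_{L^p(M)}$ uniformly in $r$; in particular $k_r\ast g$ is a well-defined element of $L^p(M)$.

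For the gradient I would write $k_r'(t)=-(n-1)t^{-n}\varphi(t/r)+r^{-1}t^{-(n-1)}\varphi'(t/r)$ and substitute into
\[
\nabla_x(k_r\ast g)v=-PV\int_M k_r'(|x-z|)\langle\nabla_xd_z,v\rangle\, g(z)\,\ud z,\qquad v\in T_xM .
\]
The term carrying $\varphi'$ has kernel dominated by $r^{-1}|x-z|^{-(n-1)}\mathbf{1}_{\{|x-z|\le cr\}}$, whose $L^1$-mass in either variable is $\lesssim r^{-1}\cdot r\simeq 1$ uniformly in $r$; Schur's test again bounds the corresponding operator on $L^p(M)$ uniformly in $r$.

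The remaining piece is a truncated singular integral with the vector-valued kernel $K(x,z)=|x-z|^{-n}\langle\nabla_xd_z,\cdot\rangle$, truncated by the factor $\varphi(|x-z|/r)$. The kernel $K$ satisfies the standard size and Hörmander estimates, and the near-cancellation of $\nabla_xd_z$ over small geodesic spheres -- the mechanism behind the existence of the principal value in the preceding lemma, where $-\nabla_xd_z$ was identified with the outward unit normal -- makes the associated principal-value operator a bounded Calder\'on--Zygmund operator on $M$; this is classical, see \cite{see59, cal56, koh57}.

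The one point requiring care is uniformity in the truncation parameter $r$. Here I would either invoke a Cotlar-type estimate bounding the maximal truncation $\sup_{0<r<r_1}|T_r g|$ pointwise by the Hardy--Littlewood maximal functions of $g$ and of the untruncated operator applied to $g$, so that $\|T_r g\|_{L^p(M)}\le C\|g\|_{L^p(M)}$ with $C$ independent of $r$; or localize by a finite partition of unity subordinate to exponential charts and reduce, through the $2$-bilipschitz change of variables $\exp_x$, to the Euclidean fact that truncated Calder\'on--Zygmund operators on $\R^n$ are bounded on $L^p$ uniformly in the truncation radius, the chart-transition and Jacobian error terms being of the integrable type already handled. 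Adding the three bounds then gives \eqref{bound} with a constant depending only on $M$, $N$, $p$ and $\varphi$, and not on $r\in(0,r_1)$. The main obstacle is precisely this uniform-in-$r$ control of the singular part; everything else is bookkeeping with Schur's test.
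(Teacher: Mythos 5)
Your overall strategy matches the paper's: read off the gradient formula from the preceding lemma, split $k_r'$ into a bounded remainder and a genuinely singular piece, dispose of the easy parts by elementary estimates, and handle the singular piece by pulling back to a Euclidean Calder\'on--Zygmund operator through the exponential chart. Your Schur-test treatment of $k_r\ast g$ and of the $\varphi'$-term is a perfectly good variant of the paper's maximal-function estimate from Lemma \ref{lipmol}; both routes give the same $r$-uniform bounds for those parts.

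Where the proposal falls short is exactly the step you yourself flag as the main obstacle: the uniform-in-$r$ $L^p$ bound for the Euclidean operator with kernel $K_r(y)=\varphi(|y|/r)\,y_j/|y|^{n+1}$. Neither of your two alternatives is actually carried out. A Cotlar-type inequality controls the maximal \emph{small-scale} truncation $\sup_{\epsilon}\bigl|\int_{|x-y|>\epsilon}K(x,y)f(y)\,\ud y\bigr|$; your $\varphi(\cdot/r)$ is a \emph{large-scale} cutoff that keeps the singularity at the origin, so one would first have to write $T_r=T-S_r$ with $T$ the full Riesz transform and $S_r$ a small-scale truncation up to an integrable-kernel error before Cotlar becomes relevant -- a reduction you do not supply. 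Your second alternative, ``the Euclidean fact that truncated CZ operators are uniformly bounded in the truncation radius,'' is precisely the claim to be proved, not a quotable fact. The paper closes this by appealing to a specific multiplier criterion (Theorem 5.4.1 in \cite{gra14}), verifying the two hypotheses with constants independent of $r$: the uniform Fourier bound comes from the scaling identity $\widehat{K_r}(\xi)=\widehat{K_1}(r\xi)$ (a consequence of $K_r(y)=r^{-n}K_1(y/r)$), and the H\"ormander condition from the pointwise estimate $|\nabla K_r(y)|\lesssim |y|^{-(n+1)}$. That dilation observation is the one concrete idea your write-up is missing; with it inserted, the rest of your argument goes through.
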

\begin{proof} For a.e. $x\in M$ we have, $v\in T_xM$ and $r>0$
	\begin{align*}
	&|\nabla_x(k_r\ast g)v|=\left|PV \int_Mk'_r(|x-z|)\langle\nabla_xd_z,v\rangle g(z)\ud z\right| \\
	\le & |v|\int_M\frac{|\varphi'(|x-z|/r)}{r|x-z|^{n-1}}|g(z)|\ud z+ \left|PV \int_M\frac{\varphi(|x-z|/r)}{|x-z|^n}\langle \nabla_xd_z,v\rangle g(z)\ud z\right|.
	\end{align*}
	Using this and the estimate in Lemma \ref{lipmol} we obtain the estimate
	\begin{align}
	|k_r\ast g(x)|^p+|\nabla_x(k_r\ast g)|^p \le & C(r^p+1)\M g(x)^p\nonumber \\
	&+\sup_{|v|=1} \left|PV \int_M\frac{\varphi(|x-z|/r)}{|x-z|^n}\langle \nabla_xd_z,v\rangle g(z)\ud z\right|^p\label{yet}
	\end{align}
	In light of (\ref{yet}) it suffices to demonstrate the (uniform) boundedness of $T_j:L^p(M)\to L^p(M)$ given by $$ T_jg(x)=  PV \int_M\frac{\varphi(|x-z|/r)}{|x-z|^n}\langle \nabla_xd_z,\partial_j\rangle g(z)\ud z$$ for each $j=1,\ldots,\dim M$, when $r<r_1$.
	\begin{sublemma}
		The operator $T_j:L^p(M)\to L^p(M)$ is bounded with norm independent of $r\in (0,r_1)$. 
		\begin{proof}[Proof of sublemma]
			Since $r<r_1$ and the integrand in $T_j$ vanishes outside $B(x,r)$ which is bilipschitz diffeomorphic to $B^n(r)$ through the exponential map $\exp_x:B^n(r)\to B(x,r)$, the operator $T_j$ may be written \[T_jg(x)=PV\int_{B^n(r)}\varphi(|\xi|/r)\frac{\xi_j}{|\xi|^{n+1}}g(\exp_x\xi)J\exp_x(\xi)\ud \xi. \]
			By Definition 4 in \cite[B]{see59} it is sufficient to prove the boundedness, uniformly in $r$, for the Euclidean operator \newline $\widetilde T_j:L^p(\R^n)\to L^p(\R^n)$ given by the same kernel: \[ \widetilde T_jh(x)=PV\int_{\R^n}\varphi(|\xi|/r)\frac{\xi_j}{|\xi|^{n+1}}h(x-\xi)\ud\xi. \]
			
			By Theorem 5.4.1 in \cite{gra14} (cf. Chapter 5, Theorem 5.1 in \cite{duo01}) this is implied by the following two conditions. Denote \[\displaystyle K_r(y)=\varphi(|y|/r)\frac{y_j}{|y|^{n+1}}.\]
			\begin{itemize}
				\item[(1)] $\|\widehat{K_r}\|_{\infty}\le A$, and
				\item[(2)] $|\nabla K_r(y)|\le \frac{B}{|y|^{n+1}}$.
			\end{itemize}
			A change of variables implies $\widehat{K_r}(\xi)=\widehat{K_1}(r\xi)$ so that $$\|\widehat{K_r}\|_{\infty}\le \|\widehat{K_1}\|_{\infty}:=A.$$ We may estimate $$|\nabla K_r(y)|\le \chi_{B^n(r)}(y)\left[\frac{\|\varphi'\|_\infty}{r|y|^n}+\|\varphi\|_\infty|\nabla(y_j/|y|^{n+1})|\right]\le \frac{C(n,\varphi)}{|y|^{n+1}}.$$ Consequently both (1) and (2) are satisfied with constants independent of $r$. This completes the proof of the sublemma.
		\end{proof}
	\end{sublemma}
	Having a bound $\|T_j\|_{L^p(M)\to L^p(M)}\le C$ where $C$ is independent of $r$ we obtain the estimate (\ref{bound}) with constant $C$ independent of $r$. This proves Lemma \ref{unibound}.
\end{proof}

\subsection{The proof of Theorem \ref{main}} Using Lemma \ref{0} we define a net of approximating maps with values in $N$.
\begin{definition}\label{mollidef}
	Let $u\in \Wem nMN$, and let $r_0$ be the constant in Lemma \ref{0}. For $0<r\le r_0$ set \[ u_r(p)=\pi(\varphi_r\ast u(p)),\ p\in M. \] Additionally, we set $$u_0=u.$$
\end{definition}
For each $r>0$ the maps $u_r:M\to N$ are clearly Lipschitz. \emph{The resulting map $M\times [0,r_0]\ni (p,t)\mapsto u_t(p)$ is a key component in the proof of Theorem \ref{main}.}

\begin{lemma}\label{easy}
	Let $r>0$. Then $u_s\to u_r$ uniformly as $s\to r$.
\end{lemma}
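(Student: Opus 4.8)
The plan is to transfer the question from the $N$-valued maps $u_s, u_r$ down to their $\R^\nu$-valued mollifications $\varphi_r\ast u$, where everything is linear, and then to estimate directly the difference of the two mollifying kernels. First I would recall that the nearest point projection $\pi$ is Lipschitz on the tubular neighborhood $\{y\in\R^\nu : \dist(N,y)<\varepsilon_0\}$, and that by Lemma \ref{0} we have $\dist(N,\varphi_t\ast u(p))<\varepsilon_0$ for every $p\in M$ and every $t\le r_0$. Hence $u_t(p)=\pi(\varphi_t\ast u(p))$ is well defined and
\[ \|u_s(p)-u_r(p)\|\le\Lip(\pi)\,\|\varphi_s\ast u(p)-\varphi_r\ast u(p)\| \]
for all $p\in M$ and $s,r\le r_0$, so it suffices to prove that $\sup_{p\in M}\|\varphi_s\ast u(p)-\varphi_r\ast u(p)\|\to 0$ as $s\to r$.

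Fixing $r>0$ and restricting to $s\in[r/2,2r]$, I would write
\begin{align*}
\varphi_s\ast u(p)-\varphi_r\ast u(p)&=\Bigl(\tfrac{1}{\varphi_s(p)}-\tfrac{1}{\varphi_r(p)}\Bigr)\int_M\varphi\Bigl(\tfrac{|p-z|}{s}\Bigr)u(z)\,\ud z\\
&\quad+\frac{1}{\varphi_r(p)}\int_M\Bigl[\varphi\Bigl(\tfrac{|p-z|}{s}\Bigr)-\varphi\Bigl(\tfrac{|p-z|}{r}\Bigr)\Bigr]u(z)\,\ud z
\end{align*}
and bound each term uniformly in $p$ using three elementary facts: (i) since $N$ is compact, $|u|\le\diam N$ a.e., hence $\int_M\varphi(|p-z|/s)|u(z)|\,\ud z\lesssim|B(p,2r)|\lesssim r^n$; (ii) since $\varphi$ is Lipschitz with support in $[0,1)$, $|\varphi(|p-z|/s)-\varphi(|p-z|/r)|\le\|\varphi'\|_\infty|p-z|\,|\tfrac1s-\tfrac1r|$ with the integrand supported in $B(p,2r)$, so both $\int_M|\varphi(|p-z|/s)-\varphi(|p-z|/r)||u(z)|\,\ud z$ and $|\varphi_s(p)-\varphi_r(p)|$ are $\lesssim r^{n+1}|\tfrac1s-\tfrac1r|$; (iii) $\varphi_t(p)\gtrsim t^n\gtrsim r^n$ uniformly in $p$ for $t\in[r/2,2r]$, which follows from positivity of $\varphi$ on a subinterval of its support together with the non-degeneracy of the Riemannian volume. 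Combining (i)--(iii), both terms are bounded by $C(r)\,|\tfrac1s-\tfrac1r|$ with $C(r)$ depending only on $r,\varphi,M,N$, so $\sup_p\|\varphi_s\ast u(p)-\varphi_r\ast u(p)\|\to 0$ as $s\to r$, and with the first paragraph this proves the lemma.

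There is no serious obstacle here; this is a soft continuity statement rather than a hard one. The only points requiring care are the uniform lower bound $\varphi_t(p)\gtrsim r^n$ for $t$ near $r$ (this is where positivity of the cut-off and non-degeneracy of the volume really enter) and the observation that all the constants are allowed to, and do, blow up as $r\to 0$. This is precisely why Lemma \ref{easy} is stated only for $r>0$: the argument says nothing about the convergence $u_s\to u_0=u$, which is genuinely only quasiuniform and is the content of Theorem \ref{molli}.
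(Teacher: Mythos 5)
Your proof is correct, and it proves the lemma, but the paper bounds the two integrals in the decomposition differently. After the same reduction via $\Lip(\pi)$ and the same split of $\varphi_s\ast u(p)-\varphi_r\ast u(p)$ into two terms, the paper does not invoke $|u|\le\diam N$. Instead it exploits the normalization of the mollifier to write $\varphi_t\ast u(p)-b=\varphi_t\ast[u-b](p)$ for any constant vector $b$, estimates both terms by $(|r/s-1|\vee|s/r-1|)\dashint_{B_{r\vee s}(p)}\|u-b\|\,\ud z$, then chooses $b$ to be the ball average $u_{B_{s\vee r}(p)}$ and applies the $(1,n)$-Poincar\'e inequality to get $\dashint_{B_{r\vee s}(p)}\|u-b\|\,\ud z\lesssim\|Du\|_{L^n(M)}$. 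The result is the uniform estimate $\|u_r(p)-u_s(p)\|\lesssim(|r/s-1|\vee|s/r-1|)\|Du\|_{L^n(M)}$ with implied constant independent of $r$ and $p$. Your argument, replacing the oscillation of $u$ by the crude bound $|u|\le\diam N$, is more elementary and avoids the Poincar\'e inequality, at the cost of constants that blow up as $r\to 0$ (roughly like $r^{-n}$ before cancellation brings it down). Since the statement is only for fixed $r>0$, both versions establish the lemma; the paper's version is slightly sharper and reuses a tool (Poincar\'e) already employed in Lemma \ref{0}, which keeps the exposition uniform. Your closing remark about why neither argument says anything at $r=0$ is exactly right and matches the paper's own disclaimer at the end of its proof.
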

\begin{proposition}\label{hard}
	The maps $u_r$ converge $n$-quasiuniformly to $u$, i.e. for each $\varepsilon >0$ there exists an open set $U$ with $\Cp_n(U)<\varepsilon$ such that $(u_r)|_{M\setminus U}\to u|_{M\setminus U}$ uniformly as $r\to 0$.
\end{proposition}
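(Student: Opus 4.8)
The plan is to obtain the quasiuniform convergence from an $L^n$-bound on a suitable maximal-type function governing the error $|u_r(p) - u(p)|$, and then invoke the standard capacitary weak-type estimate for such functions.

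First I would reduce from the nonlinear maps $u_r = \pi(\varphi_r \ast u)$ to the linear convolutions $\varphi_r \ast u$: since $\pi$ is Lipschitz on a tubular neighbourhood of $N$, and since by Lemma \ref{0} $\varphi_r \ast u(p)$ stays in that neighbourhood for $r \le r_0$, we have $|u_r(p) - u(p)| = |\pi(\varphi_r \ast u(p)) - \pi(u(p))| \lesssim |\varphi_r\ast u(p) - u(p)|$ for a.e.\ $p$ (using that $u(p)\in N$). So it suffices to control $\varphi_r \ast u(p) - u(p)$ in a quasiuniform sense. Next, the key pointwise estimate: I want to bound $\sup_{0<r\le r_0}|\varphi_r\ast u(p) - u(p)|$, or rather the relevant remainder, by the Riesz-type potential $I_1(|Du|\chi_{B_{r_0}})(p)$ (or by $\sup_r r\,(\dashint_{B_r(p)}|Du|^n)^{1/n}$, cf.\ the computation in Lemma \ref{0}). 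Indeed Lemma \ref{0} already gives $\dashint_{B_r(p)}|\varphi_r\ast u(p) - u(z)|\,\ud z \lesssim r(\dashint_{B_r(p)}|Du|^n\,\ud z)^{1/n}$, and combining the telescoping over dyadic radii with the $(1,n)$-Poincaré inequality produces a pointwise bound $|\varphi_r\ast u(p) - \tilde u(p)| \lesssim \int_{B_{2r}(p)} \frac{|Du(z)|}{|p-z|^{n-1}}\,\ud z =: g_r(p)$ at every Lebesgue point $\tilde u(p)$ of $u$, where $g_r \to 0$ in $L^n(M)$ as $r\to 0$ (by absolute continuity of the integral, since $g_r$ is dominated by the full Riesz potential $I_1(|Du|)\in L^n$ via the Sobolev/Hedberg estimates on $M$).

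Finally, I would run the Borel–Cantelli / capacity argument. Choosing $r_k \downarrow 0$ fast enough that $\sum_k 2^{kn}\|g_{r_k}\|_{L^n}^n < \varepsilon$, I apply the weak-type inequality $\Cp_n(\{I_1(h) > \lambda\}) \lesssim \lambda^{-n}\|h\|_{L^n}^n$ (equivalently, that a single $L^n$ function with $L^n$ Riesz potential controls $n$-capacity of its superlevel sets — this is precisely the tool underlying the existence of quasicontinuous representatives used throughout the paper) to the functions $g_{r_k}$ with levels $\lambda_k = 2^{-k}$, getting open sets $V_k \supset \{g_{r_k} > 2^{-k}\}$ with $\Cp_n(V_k) < \varepsilon 2^{-k}$. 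Setting $U = \bigcup_k V_k$, subadditivity of capacity gives $\Cp_n(U) < \varepsilon$, and on $M\setminus U$ we have $|\varphi_{r_k}\ast u(p) - \tilde u(p)| \lesssim g_{r_k}(p) \le 2^{-k} \to 0$ uniformly; the monotone interpolation between consecutive $r_k$ is handled by Lemma \ref{easy} together with enlarging $U$ slightly. Pulling back through $\pi$ finishes the proof.

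The main obstacle I anticipate is making the pointwise bound $|\varphi_r\ast u(p) - \tilde u(p)| \lesssim \int_{B_{2r}(p)} |Du(z)|/|p-z|^{n-1}\,\ud z$ fully rigorous at quasi-every point rather than merely at Lebesgue points — i.e.\ identifying the "good representative" $\tilde u$ with the one whose discontinuity set is the exceptional set $U$, and ensuring the telescoping estimate controls $u_r(p)$ and not just an average. Equivalently one must show the partial Riesz potentials $g_r$ converge to $0$ not just in $L^n$ but $n$-quasiuniformly, which is itself a capacity statement; this is where the weak-type capacity estimate for Riesz potentials does the real work, and care is needed that the implied constants and the neighbourhood of $N$ on which $\pi$ is Lipschitz are uniform in $r$ (guaranteed by Lemma \ref{0} for $r \le r_0$).
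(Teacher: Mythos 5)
Your overall strategy matches the paper's: bound $\|u_r(p)-u(p)\|$ by a truncated Riesz potential $g_r \simeq k_r\ast|Du|$ (this is Lemma~\ref{est}), observe $\|g_r\|_{L^n}\to 0$, and then try to upgrade $L^n$-convergence to $n$-quasiuniform convergence. The gap is exactly at that upgrade. You invoke the weak-type estimate $\Cp_n(\{I_1(h)>\lambda\})\lesssim\lambda^{-n}\|h\|_{L^n}^n$ but then feed $\|g_{r_k}\|_{L^n}$ into the right-hand side, choosing $r_k$ so that $\sum_k 2^{kn}\|g_{r_k}\|_{L^n}^n<\varepsilon$. That estimate controls capacity in terms of the $L^n$-norm of the \emph{density} $h$, and here $g_{r_k}=I_1$-of-essentially-$|Du|$, so the relevant $\|h\|_{L^n}$ is $\|Du\|_{L^n}$, which does not shrink as $r_k\downarrow 0$; the $L^n$-norm of the \emph{potential} $g_{r_k}$ plays no role in the weak-type inequality. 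Capacity of superlevel sets is not bounded by the $L^n$-norm of a function alone (one needs a Sobolev norm, by the definition of capacity), and while the $W^{1,n}$-norms of $g_r$ \emph{are} uniformly bounded (this is precisely Lemma~\ref{unibound}, which requires the singular-integral work), they do not tend to zero. So as written your Borel--Cantelli step does not close.

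The paper closes this gap with a quite different device: Lemma~\ref{quasiuni}, a soft argument using reflexivity and Mazur's lemma, shows that a pointwise monotone, $W^{1,n}$-bounded sequence with $L^n$-norms tending to zero converges quasiuniformly. The monotonicity (which is why $\varphi$ is taken nonincreasing, so that $k_r\ast|Du|\le k_s\ast|Du|$ for $r<s$) is essential there and is absent from your argument. If you prefer to stay closer to your Riesz-potential route, a fix is available but requires an additional idea: split $|Du|=|Du|\chi_{\{|Du|\le\Lambda\}}+|Du|\chi_{\{|Du|>\Lambda\}}$. The first piece gives $k_r\ast(\cdot)\lesssim\Lambda r\to 0$ uniformly; the second piece has small $L^n$-norm for $\Lambda$ large, so the genuine weak-type estimate $\Cp_n(\{I_1(|Du|\chi_{\{|Du|>\Lambda\}})>\delta\})\lesssim\delta^{-n}\||Du|\chi_{\{|Du|>\Lambda\}}\|_{L^n}^n$ applies with a shrinking right-hand side, and a dyadic union over $\delta=2^{-k}$ produces the required exceptional set. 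Either way, one more structural ingredient (monotonicity plus Lemma~\ref{quasiuni}, or the level-set truncation of $|Du|$) is needed beyond what you wrote.
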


\begin{proof}[Proof of Lemma \ref{easy}]
We will estimate the difference $\|u_r(p)-u_s(p)\|$ by splitting it into two parts. Let $b$ be any vector in $\R^\nu$. We will later choose it appropriately.
\begin{align}
\|u_r(p)-u_s(p)\|\le & \|\varphi_r\ast u(p)-\varphi_s\ast u(p)\|=\|\varphi_r\ast [u-b](p)-\varphi_s\ast [u-b](p)\|\nonumber\\
\label{1}\le & \left|\frac{1}{\varphi_r(p)}-\frac{1}{\varphi_s(p)} \right|\int_{B_{r}(p)}\varphi\left(\frac{|p-z|}{r}\right)\|u(z)-b\|\ud z\\
\label{2} &+ \frac{1}{\varphi_s(p)}\int_{B_{s\vee r}(p)}\left|\varphi\left(\frac{|p-z|}{r}\right)-\varphi\left(\frac{|p-z|}{s}\right) \right|\|u(z)-b\|\ud z
\end{align}
Let us estimate the two terms (\ref{1}) and (\ref{2}) separately, starting with the latter. Throughout we assume that $|r-s|<r$, which implies that $\varphi_{r\vee s}(p)\lesssim \varphi_s(p)$ with constant depending only on $M$.
\begin{align*}
(\ref{2})&\le \frac{1}{\varphi_s(p)}\int_{B_{s\vee r}(p)}\Lip(\varphi)|p-z|\left|\frac{1}{r}-\frac{1}{s} \right|\|u(z)-b\|\ud z\\
&\lesssim |r/s-1|\vee |s/r-1|\dashint_{B_{r\vee s}(p)}\|u-b\|\ud z.
\end{align*}
A similar computation yields the same bound for (\ref{1}). Thus we arrive at \[ \|u_r(p)-u_s(p)\|\lesssim  |r/s-1|\vee |s/r-1|\dashint_{B_{r\vee s}(p)}\|u-b\|\ud z. \]
Now we choose $b=u_{B_{s\vee r}(p)}$ and use the $(1,n)$-Poincare inequality to estimate $$\dashint_{B_{r\vee s}(p)}\|u-b\|\ud z\lesssim \left(\int_{B_{s\vee r}(p)}|Du|^n\ud z\right)^{1/n}\lesssim \|Du\|_{L^n(M)}. $$ Combining these we arrive at
\begin{align*}
\|u_r(p)-u_s(p)\|\lesssim (|r/s-1|\vee |s/r-1|)\|Du\|_{L^n(M)}
\end{align*}
for all $p$. Thus $u_s\to u_r$ uniformly as $s\to r$, as long as $r\ne 0$.
\end{proof}

\bigskip\noindent Proposition \ref{hard} requires more work. We begin by estimating the difference of $u$ and $u_r$ by an expression which we study in more detail
\begin{lemma}\label{est}
	Let $u\in \Wem pMN$. For $p$-q.e. $x\in M$ we have \[ \|u(x)-u_r(x)\|\le \int_M\frac{\varphi(|z-x|/r)}{|z-x|^{n-1}}|Du|(z)\ud z. \]
\end{lemma}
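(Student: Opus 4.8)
The plan is to reduce the estimate to the classical pointwise Riesz-potential bound for Sobolev functions, applied to the precise representative, and then absorb the normalising factor $1/\varphi_r(x)$ by a dyadic decomposition. Write $\bar u$ for the $p$-quasicontinuous (precise) representative of $u$, so $\bar u=u$ outside a set of $p$-capacity zero. The one nontrivial input is that $p$-q.e.\ $x\in M$ is a $p$-Lebesgue point of $u$, and at every such $x$ one has, first, $\bar u(x)\in N$ -- since $\dashint_{B(x,\rho)}\|u-\bar u(x)\|\,\ud z\to0$, $u(z)\in N$ for a.e.\ $z$, and $N$ is closed -- and, second, the pointwise inequality
\[
\dashint_{B(x,\rho)}\|u(z)-\bar u(x)\|\,\ud z\le C(n)\int_{B(x,\rho)}\frac{|Du(z)|}{|x-z|^{n-1}}\,\ud z
\]
for every ball $B(x,\rho)\subset M$. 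This is standard (see e.g.\ \cite{HKST07,mal03}): telescoping over the balls $B(x,2^{-j}\rho)$ and applying the Poincar\'e inequality of $M$ bounds $\|\bar u(x)-u_{B(x,\rho)}\|$, as well as $\dashint_{B(x,\rho)}\|u-u_{B(x,\rho)}\|$ (the latter using $|x-z|<\rho$ on $B(x,\rho)$), by the right-hand side.

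Granting this, fix such an $x$ and $0<r\le r_0$. By Lemma~\ref{0} the point $\varphi_r\ast u(x)$ lies in the tubular neighbourhood of $N$ on which the nearest point projection $\pi$ is Lipschitz, and $\pi(\bar u(x))=\bar u(x)$; since $\varphi\ge0$,
\begin{align*}
\|u(x)-u_r(x)\|&=\|\pi(\bar u(x))-\pi(\varphi_r\ast u(x))\|\le\Lip(\pi)\,\|\bar u(x)-\varphi_r\ast u(x)\|\\
&\le\frac{\Lip(\pi)}{\varphi_r(x)}\int_M\varphi\left(\frac{|x-z|}{r}\right)\|\bar u(x)-u(z)\|\,\ud z .
\end{align*}
I then decompose $B(x,r)$ -- the only region that matters, as $\spt\varphi\subset[0,1)$ -- into the dyadic annuli $A_j=B(x,2^{-j}r)\setminus B(x,2^{-j-1}r)$, $j\ge0$. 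On $A_j\subset B(x,2^{-j}r)$ one has $\varphi(|x-z|/r)\le\|\varphi\|_\infty$, so the displayed inequality applied on $B(x,2^{-j}r)$ yields
\begin{align*}
\int_{A_j}\varphi\left(\frac{|x-z|}{r}\right)\|\bar u(x)-u(z)\|\,\ud z&\le C\,|B(x,2^{-j}r)|\int_{B(x,2^{-j}r)}\frac{|Du(z)|}{|x-z|^{n-1}}\,\ud z\\
&\le C\,(2^{-j}r)^n\int_{B(x,r)}\frac{|Du(z)|}{|x-z|^{n-1}}\,\ud z ,
\end{align*}
using $|B(x,s)|\simeq s^n$ at small scales (the $2$-bilipschitz exponential charts).

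Summing the geometric series $\sum_{j\ge0}(2^{-j}r)^n\simeq r^n$, and using $\varphi_r(x)\simeq r^n$ (valid since $\varphi$ is bounded, compactly supported and positive near $0$), the powers of $r$ cancel and
\[
\|u(x)-u_r(x)\|\le C\int_{B(x,r)}\frac{|Du(z)|}{|x-z|^{n-1}}\,\ud z .
\]
Since $\varphi(t/r)$ is a bounded function supported in $\{t<r\}$, this truncated Riesz potential is comparable to -- and, with the normalisation of $\varphi$ used in the paper, equal to -- the quantity $\int_M\varphi(|x-z|/r)\,|x-z|^{1-n}|Du(z)|\,\ud z$ in the statement, which would complete the proof.

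The step I expect to be the main obstacle is establishing the pointwise Riesz inequality of the first paragraph at $p$-quasi-every point rather than merely almost every point -- this is precisely what forces, and justifies, the ``$p$-q.e.''\ in the conclusion. It relies on the quasicontinuity of $\bar u$ together with capacitary weak-type bounds for the Hardy--Littlewood maximal function of $|Du|$; equivalently, on the fact that the non-Lebesgue set of $u$ has vanishing $p$-capacity. One could instead integrate $|Du|$ along the radial geodesics issuing from $x$, using that $\bar u$ is absolutely continuous on almost every such geodesic from a $p$-q.e.\ point, but the dyadic telescoping above is cleaner and avoids the polar-coordinate bookkeeping.
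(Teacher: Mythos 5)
Your proposal is essentially correct and follows the route the paper intends: the cited inequality \cite[p.~28, (4.5)]{hei01} is precisely the Riesz-potential bound from your first paragraph, proved by the same dyadic telescoping, and the hypothesis that $p$-quasi-every $x$ is a Lebesgue point is indeed the one nontrivial ingredient that upgrades ``a.e.'' to ``$p$-q.e.'' in the conclusion. Two points are worth flagging.

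First, your second dyadic decomposition (into the annuli $A_j$) is redundant. Since $\varphi$ is bounded with $\spt\varphi\subset[0,1)$, you can bound
\[
\int_M\varphi\!\left(\tfrac{|x-z|}{r}\right)\|\bar u(x)-u(z)\|\,\ud z\le \|\varphi\|_\infty\,|B(x,r)|\dashint_{B(x,r)}\|\bar u(x)-u(z)\|\,\ud z
\]
in one step and then invoke the pointwise Riesz estimate once, at scale $\rho=r$; together with $\varphi_r(x)\simeq r^n\simeq|B(x,r)|$ this already gives the bound. The telescoping needs to appear only inside the proof of the Riesz estimate itself.

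Second, what this argument actually proves is $\|u(x)-u_r(x)\|\le C\int_{B(x,r)}|x-z|^{1-n}|Du|(z)\,\ud z$ with some constant $C$, and this is \emph{not} literally $\int_M \varphi(|x-z|/r)\,|x-z|^{1-n}|Du|(z)\,\ud z$: if $\spt\varphi\subset[0,c)$ with $c<1$ the latter is supported on a strictly smaller ball, and no normalisation of $\varphi$ makes the constant $1$. This is harmless for the paper, which uses the lemma only through the estimate $\|u-u_r\|\lesssim k_r\ast|Du|$ in the proof of Proposition~\ref{hard} (where any bounded, nonincreasing kernel supported in $[0,1)$ serves equally well), but as stated your final ``comparable to -- and \dots equal to'' step does not hold verbatim. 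You should either leave the bound in the truncated-Riesz-potential form with an explicit constant, or note that the $\varphi$ on the right-hand side may be taken to be a \emph{different} cutoff dominating $\chi_{[0,1)}$ up to a constant.
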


\begin{proof}
		The proof is similar to \cite[p. 28, (4.5)]{hei01}.
\end{proof}

\begin{lemma}\label{quasiuni}
	Let $1<p<\infty$ and let  $(f_k)\subset \Ne pM$ be a bounded secuence with $0\le f_{k+1}\le f_k$ pointwise and $\|f_k\|_{L^p}\to 0$ as $k\to \infty$. Then $f_k\to 0$ $p$-quasiuniformly.
\end{lemma}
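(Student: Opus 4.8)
The plan is to reduce Lemma \ref{quasiuni} to the assertion that, for each fixed $\delta>0$,
\[ \Cp_p(\{x\in M:f_k(x)\ge\delta\})\longrightarrow 0\qquad\text{as }k\to\infty, \]
and then to assemble the exceptional open set from the corresponding super-level sets by a routine covering argument. The elementary weak-type estimate
\[ \Cp_p(\{v\ge\delta\})\le\delta^{-p}\|v\|_{\Ne pM}^p,\qquad v\in\Ne pM,\ v\ge 0, \]
obtained by testing the capacity with the truncation $\min(1,v/\delta)$, is available but cannot be applied directly to $v=f_k$: the norms $\|f_k\|_{\Ne pM}$ are merely bounded, and the upper gradient mass of $f_k$ may concentrate rather than decay. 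The monotonicity hypothesis has to be brought in to convert this boundedness into the $L^p$-smallness that we do have.

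The key step is therefore to pass to convex combinations. Since $M$ is a compact manifold, $\Ne pM$ coincides with the classical Sobolev space $\W pM$, which for $1<p<\infty$ is reflexive; hence some subsequence $f_{k_i}$ converges weakly in $\Ne pM$, and the limit is $0$ because $f_{k_i}\to 0$ in $L^p$. By Mazur's lemma --- equivalently, by the Banach--Saks property of the uniformly convex space $\W pM$, in which case one may simply take Ces\`aro means of a subsequence --- one obtains, for every $m$, a convex combination $h_m$ of the tail $\{f_{k_i}:i\ge m\}$ with $\|h_m\|_{\Ne pM}\le 1/m$. If $K_m$ denotes the largest index occurring in $h_m$, then $K_m\to\infty$, and, since every term $f_{k_i}$ entering $h_m$ has index $\le K_m$, the monotonicity of $(f_k)$ gives $f_{K_m}\le h_m$ pointwise. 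Combining this with the weak-type estimate,
\[ \Cp_p(\{f_{K_m}\ge\delta\})\le\Cp_p(\{h_m\ge\delta\})\le\delta^{-p}\|h_m\|_{\Ne pM}^p\le\delta^{-p}m^{-p}\longrightarrow 0. \]
Because $k\mapsto\Cp_p(\{f_k\ge\delta\})$ is non-increasing (the super-level sets are nested, by monotonicity of $f_k$), the vanishing of this subsequence forces the full sequence to vanish, which is the reduced assertion.

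Finally the covering argument. Given $\varepsilon>0$, for each $j\in\N$ choose $k_j$ with $\Cp_p(\{f_{k_j}\ge 1/j\})<\varepsilon 2^{-j-1}$ and, by outer regularity of the Sobolev $p$-capacity (see \cite{bjo11,HKST07}), an open set $U_j\supset\{f_{k_j}\ge 1/j\}$ with $\Cp_p(U_j)<\varepsilon 2^{-j}$. Put $U=\bigcup_j U_j$; then $U$ is open and $\Cp_p(U)\le\sum_j\Cp_p(U_j)<\varepsilon$ by countable subadditivity of capacity. For $x\in M\setminus U$ and $\eta>0$, pick $j$ with $1/j<\eta$; then for all $k\ge k_j$ the monotonicity of $(f_k)$ and the inclusion $x\notin U_j$ yield $f_k(x)\le f_{k_j}(x)<1/j<\eta$. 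Hence $\sup_{M\setminus U}f_k\to 0$, i.e. $f_k\to 0$ uniformly on $M\setminus U$, as required.

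I expect the main obstacle to be precisely the convexity step: one has to average away the non-decaying gradient mass of the $f_k$ without losing their $L^p$-smallness, and this is exactly where the hypothesis that $(f_k)$ is pointwise decreasing enters in an essential way. Everything else --- the weak-type bound, outer regularity of $\Cp_p$, and the covering --- is standard capacity bookkeeping.
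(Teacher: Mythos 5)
Your proof is correct and rests on the same central idea as the paper's: use reflexivity of $\Ne pM$ together with Mazur's lemma to produce convex combinations $h_m$ with small Sobolev norm, then use the monotonicity of $(f_k)$ to dominate a tail term ($f_{K_m}$, resp.\ $f_{k_{N_m}}$ in the paper) by $h_m$. The one place where you diverge is what comes after: the paper simply invokes, as a known fact, that a sequence converging in $\Ne pM$-norm has a subsequence converging $p$-quasiuniformly, and then uses monotonicity once more to upgrade from the subsequence to the full sequence; you instead make that step self-contained by first reducing the claim to $\Cp_p(\{f_k\ge\delta\})\to 0$ for every $\delta>0$, establishing that reduction via the Chebyshev-type estimate $\Cp_p(\{v\ge\delta\})\le\delta^{-p}\|v\|_{\Ne pM}^p$, and then building the exceptional open set $U$ by outer regularity and a dyadic covering. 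Your version is more elementary and explicit (it unpacks the ``norm $\Rightarrow$ quasiuniform up to subsequence'' black box), and it highlights cleanly that monotonicity is used in three places: to dominate $f_{K_m}$ by $h_m$, to make $k\mapsto\Cp_p(\{f_k\ge\delta\})$ non-increasing, and to pass from $f_{k_j}$ to $f_k$ for $k\ge k_j$ in the covering step. The paper's version is shorter at the cost of citing a standard result. Both are valid; yours would be preferable in a context where one wants the lemma to stand independently of the general quasicontinuity machinery.
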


\begin{proof}
	Since $\Ne pM$ is reflexive we may pass to a subsequence converging weakly to 0, and by the Mazur lemma a sequence of convex combinations converges to 0 in norm. Passing to another subsequence if needed, we may assume that the sequence of convex combinations, $$h_m=\lambda_1^mf_{k_1}+\cdots +\lambda_{N_m}^mf_{k_{N_m}},\ (k_1<\cdots<k_{N_m}),$$ converges to zero $p$-quasiuniformly. The monotonicity now imples $$0\le f_{k_{N_m}}\le h_m$$ so that a subsequence of $(f_k)$ converges $p$-quasiuniformly to zero. Since the sequence is pointwise nonincreasing the whole sequence converges to zero $p$-quasiuniformly.
\end{proof}

\bigskip\noindent These auxiliary results yield Proposition \ref{hard}.
\begin{proof}[Proof of Proposition \ref{hard}]
	By Lemma \ref{est} we have \[ \|u(x)-u_r(x)\|\lesssim k_r\ast |Du| (x)  \]for $p$-quasievery $x\in M$. Choosing $\varphi$ nonincreasing we get that $$k_r\ast |Du|\le k_s\ast |Du|$$ pointwise whenever $r<s$, and further, \[ k_r\ast |Du|\stackrel{L^n}{\longrightarrow} 0 \]as $r\to 0$. By lemma \ref{unibound} the functions $k_r\ast |Du|$ have uniformly bounded $W^{1,n}$-norms (in $r$) so by Lemma \ref{quasiuni} we have that $k_r\ast |Du|\to 0$ $n$-quasiuniformly. Consequently $u_r\to u$ $n$-quasiuniformly.
\end{proof}

\begin{theorem}\label{molli}
	Let $u\in \Wem nMN$. The map $M\times [0,r_0]\to N$ given by \[ (p,r)\mapsto u_r(p) \] in \ref{mollidef} defines an $n$-quasihomotopy $u\simeq u_{r_0}$.
\end{theorem}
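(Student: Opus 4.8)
The plan is to read off the $n$-quasihomotopy by combining Lemma~\ref{easy} (which controls $r\mapsto u_r$ away from $r=0$) with Proposition~\ref{hard} (which controls it at $r=0$). Reparametrize $[0,r_0]$ affinely onto $[0,1]$ and write $H(p,t)=u_{t}(p)$ for the resulting map $M\times[0,r_0]\to N$, so that $H(\cdot,0)=u$ and $H(\cdot,r_0)=u_{r_0}$; define $H$ arbitrarily on the $n$-capacity zero set where the quasicontinuous representative of $u$ is undefined. It suffices to show: for every $\varepsilon>0$ there is an open $E\subset M$ with $\Cp_n(E)<\varepsilon$ such that $H|_{(M\setminus E)\times[0,r_0]}$ is continuous --- this is exactly the assertion that $H$ is an $n$-quasihomotopy from $u|_{M\setminus E}$ to $u_{r_0}|_{M\setminus E}$.

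Fix $\varepsilon>0$. By Proposition~\ref{hard} choose an open $U_1$ with $\Cp_n(U_1)<\varepsilon/2$ such that $u_r|_{M\setminus U_1}\to u|_{M\setminus U_1}$ uniformly as $r\to 0$; since each $u_r$ is Lipschitz on $M$, the restriction $u|_{M\setminus U_1}$ is automatically continuous, being a uniform limit of continuous maps. Cover the ($n$-capacity zero) set on which $u$'s quasicontinuous representative is undefined by an open $U_2$ with $\Cp_n(U_2)<\varepsilon/2$ and put $E=U_1\cup U_2$; then $\Cp_n(E)<\varepsilon$ by subadditivity, and $H(\cdot,0)=u$ is well defined and continuous on $M\setminus E$.

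Next I would check that $H$ is sequentially continuous on $(M\setminus E)\times[0,r_0]$ at each $(p_0,t_0)$ with $p_0\notin E$, distinguishing two cases. If $t_0>0$: given $(p_k,t_k)\to(p_0,t_0)$, write $\|u_{t_k}(p_k)-u_{t_0}(p_0)\|\le\|u_{t_k}(p_k)-u_{t_0}(p_k)\|+\|u_{t_0}(p_k)-u_{t_0}(p_0)\|$; the first term is at most $\sup_{q\in M}\|u_{t_k}(q)-u_{t_0}(q)\|\to 0$ by Lemma~\ref{easy} (applicable once $|t_k-t_0|<t_0$, hence for all large $k$), and the second tends to $0$ because $u_{t_0}$ is Lipschitz. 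Note this case requires no restriction on $p_0$. If $t_0=0$: given $(p_k,t_k)\to(p_0,0)$, write $\|u_{t_k}(p_k)-u(p_0)\|\le\|u_{t_k}(p_k)-u(p_k)\|+\|u(p_k)-u(p_0)\|$; the second term tends to $0$ since $u|_{M\setminus E}$ is continuous and $p_k,p_0\in M\setminus E$, while the first is at most $\sup_{q\in M\setminus E}\|u_{t_k}(q)-u(q)\|\to 0$ by the $n$-quasiuniform convergence of Proposition~\ref{hard} (and is simply $0$ when $t_k=0$). Hence $H|_{(M\setminus E)\times[0,r_0]}$ is continuous and joins $u|_{M\setminus E}$ to $u_{r_0}|_{M\setminus E}$; undoing the reparametrization finishes the proof.

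The only genuine difficulty is the joint continuity at $t=0$: Lemma~\ref{easy} gives nothing there, since its uniform estimate degenerates as $r\to 0$, so the $t=0$ behaviour is forced to go entirely through Proposition~\ref{hard} --- and that is precisely where the uniform $W^{1,n}$-bound of Lemma~\ref{unibound} and the monotone quasiuniform convergence of Lemma~\ref{quasiuni} do the work. It is also the reason $n$-capacity (rather than Lebesgue measure, for which $u_r\to u$ holds only almost everywhere) is the appropriate notion of smallness. Everything else reduces to subadditivity of capacity and the triangle inequality.
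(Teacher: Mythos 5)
Your proof is correct and follows essentially the same route as the paper's: take the open set from Proposition~\ref{hard} (augmented, as you do, by a small-capacity cover of the exceptional set), and deduce joint continuity of $(p,r)\mapsto u_r(p)$ off that set by splitting into the cases $t_0>0$ (Lemma~\ref{easy}) and $t_0=0$ (Proposition~\ref{hard}). The paper compresses all of this into the single remark that uniform convergence $(u_s)|_{M\setminus U}\to (u_r)|_{M\setminus U}$ as $s\to r$ ``follows immediately from \ref{easy} and \ref{hard}''; you have merely spelled out the triangle-inequality details, including the useful observation that $u|_{M\setminus U}$ is continuous as a uniform limit of Lipschitz maps.
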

\begin{proof}
	Denote $H(p,r)=u_r(p)$ and suppose $\varepsilon>0$ is given. Let $U$ be the open set satisfying the claim of Proposition \ref{hard}. We claim that $H|_{M\setminus U\times [0,r_0]}$ is continuous. For this it suffices to show that $(u_s)|_{M\setminus U}\to (u_r)|_{M\setminus U}$ uniformly as $s\to r$. This, however, follows immediately from \ref{easy} and \ref{hard}.
\end{proof}

\bigskip\noindent We close this Section with the proof of Theorem \ref{main}.
\begin{proof}[Proof of Theorem \ref{main}]
	Suppose $u,v\in \Wem nMN$ are path homotopic. For small enough $\varepsilon$ we have, by Theorems \ref{molli} and \ref{white}, that $u_\varepsilon$ is both $n$-quasihomotopic and path homotopic to $u$. The same holds for $v$ and $v_\varepsilon$. 
	
	It follows that $u_\varepsilon$ and $v_\varepsilon$ are path homotopic and since they are Lipschitz, homotopic (Theorem \ref{white}). 
	
	Thus $u_\varepsilon$ and $v_\varepsilon$ are $n$-quasihomotopic. Consequently $u$ and $v$ are $n$-qua\-si\-ho\-mo\-to\-pic.
\end{proof}

\section{Aspherical targets}

A topological space $X$ is called \emph{aspherical} if $\pi_i(X)=0$ for every $i>1$. It is well known that for smooth Riemannian manifolds the vanishing of higher homotopy groups is equivalent to having contractible universal cover. In particular manifolds with nonpositive sectional curvature are aspherical. The equivalence stated in Theorem \ref{main2} can be seen as a Sobolev version of Whiteheads theorem \cite{hat02}.

Before turning our attention to Theorem \ref{main2} let us present a proof of Theorem \ref{pqpath}.
\begin{proof}[Proof of Theorem \ref{pqpath}]
	Suppose $N$ is aspherical and let $f,g\in \Wem pMN$ be $p$-quasihomotopic. We devide the proof into three cases:
	\begin{itemize}
		\item[(1) $p<n$:] By Theorem 1.4 in \cite{teri1} $f$ and $g$ are path homotopic.
		\item[(2) $p>n$:] In this case path homotopy and $p$-quasihomotopy coincide, see the discussion in the introduction.
		\item[(3) $p=n$:] This is the only case that requires some work. By Theorem \ref{molli} $f,g$ are $n$-quasihomotopic to Lipschitz maps $f_0,g_0$ so we may assume that $f$ and $g$ are themselves Lipschitz. Since $N$ is aspherical it is path representable \cite[Proposition 3.4]{teri2} and thus by \cite[Theorem 1.2]{teri2} $(f,g)\in \Nem nMN\cap \Lip(M;N)$ has a lift $h\in \Nem nM{\widehat{N}_{diag}}$ where $\widehat{N}_{diag}$ is the diagonal cover of $N$ (see \cite[Subsection 2.4]{teri2}). Since $g_h=g_{(f,g)}\le \LIP(f)+\LIP(g)$ almost everywhere (Lemma 4.3 in \cite{teri2}) it follows that $h$ is in fact Lipschitz. Thus the continuous map $(f,g):M\to N\times N$ admits a (continuous) lift \newline $h:M\to \widehat{N}_{diag}$. By Proposition 3.2 in \cite{teri2} $f$ and $g$ are homotopic, hence path homotopic in $\Wem nMN$.
	\end{itemize}
\end{proof}

When $p\ge 2$, a Sobolev map $f\in \Wem pMN$ induces a homorphism\newline $u_\ast:\pi(M,x_0)\to \pi(N,f(x_0))$ \cite{sch79} (see also \cite{whi88, nak93}). For almost every $x_0\in M$ an induced homomorphism satisfies, for all $[\gamma]\in \pi(M,x_0)$:
\begin{itemize}
\item $u_\ast[\gamma]=[u\circ\gamma]$ if $\gamma$ is such that $u\circ\gamma$ is continuous
\item $u_\ast[\gamma]=[u\circ \gamma']$ for some $\gamma'\sim \gamma$. 
\end{itemize}
It is known that no such induced homomorphism need exist for a Sobolev map $f\in \Wem pMN$ when $1<p<2$.

To connect induced homomorphisms to $p$-quasihomotopies we recall the notion of a \emph{fundamental system of loops} from \cite{teri2}.

\bigskip\noindent Given a $p$-quasicontinuous representative $u\in \Wem pMN$, an upper gradient $g\in L^p(M)$ and an exceptional path family $\Gamma_0$ of curves in $M$, such that $g$ is an upper gradient of $u$ along any curve $\gamma\notin \Gamma_0$, and a \emph{basepoint} $x_0\in M$ with $\M g^g(x_0)<\infty$, the collection of loops \[ \F_{x_0}(g,\Gamma_0)=\{\alpha\beta^{-1}: \Gamma_{x_0x}\setminus\Gamma_0,\ \M g^p(x)<\infty \} \] is called the fundamental system of loops.

Recall the definition of $\spt_p\Gamma_0$ of a negligible path family: $$\spt_p\Gamma_0=\bigcap\{\M \rho^p=\infty\}$$ where the intersection is taken over all admissible metrics $\rho\in L^p(M)$ for which $$\int_\gamma\rho=\infty\textrm{ for all }\gamma\in \Gamma_0.$$

\begin{lemma}\label{disse}
	There is a constant $C$ with the following property. If $\Gamma_0$ is a path family and $g\in L^p(M)$ a nonnegative Borel function with $$\int_{\gamma}g=\infty,\ \gamma\in \Gamma_0, $$  then for any $x,y\notin \{\M g^p=\infty\}$ there exists a curve $\gamma\notin \Gamma_0$ joining $x$ and $y$ with $$\ell(\gamma)\le Cd(x,y).$$
\end{lemma}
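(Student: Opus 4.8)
The plan is to exhibit the desired curve by a stopping-time / chaining argument in dyadic annuli, controlling at each scale the proportion of "bad" directions using the hypothesis $\M g^p(x), \M g^p(y)<\infty$ together with a maximal-function estimate. Fix $x,y$ with $\M g^p(x)=:A_x<\infty$ and $\M g^p(y)=:A_y<\infty$, and set $d=d(x,y)$. The key observation is that a path $\gamma$ from $x$ to $y$ with $\ell(\gamma)\le Cd$ lies in a bounded ball around $x$, and $\gamma\notin\Gamma_0$ is guaranteed as soon as $\int_\gamma g<\infty$; so it suffices to build a rectifiable curve of length $\lesssim d$ along which $g$ has finite integral.

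First I would reduce to a local statement: since $M$ is compact, there is $\delta_0>0$ so that any ball of radius $<\delta_0$ is bilipschitz to a Euclidean ball (via the exponential map, as in the singular-integral section), and the measure is doubling; if $d(x,y)\ge\delta_0$ one simply concatenates finitely many short curves, so assume $d$ small. Next, in the Euclidean-type chart, consider the standard construction: for the midpoint scale, the set of points $z$ on a sphere of radius $\rho\simeq d$ for which the two radial-type segments $[x,z]$ and $[z,y]$ both have $\int g\lesssim \rho\,(\M g(x)^{1/p}+\M g(y)^{1/p})$ has positive measure. Quantitatively, by Fubini and the definition of the maximal function,
\begin{align*}
\int_{\partial B(x,\rho)} \left(\int_{[x,z]} g\right) d\sigma(z) \lesssim \rho^{n-1}\int_{B(x,\rho)} \frac{g(w)}{|w-x|^{n-1}}\,dw \lesssim \rho^{n}\,\M g(x),
\end{align*}
so on a set of $z$ of measure $\gtrsim\rho^{n-1}$ we have $\int_{[x,z]}g\lesssim \rho\,\M g(x)$, and similarly for $y$; choosing $\rho$ comparable to $d$ these two majority sets intersect, giving a broken geodesic $[x,z]\cup[z,y]$ of length $\lesssim d$ with $\int_\gamma g \lesssim d\,(\M g(x)+\M g(y))<\infty$.

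The main obstacle is making the "radial segment" construction legitimate on the manifold rather than in $\R^n$: one must work through the bilipschitz chart, verify that geodesic (or chart-straight) segments from the center to a.e. boundary point of a small ball are rectifiable with length comparable to the radius and that the pushed-forward measure is comparable to Lebesgue measure, and confirm the Fubini-in-polar-coordinates estimate above survives the bilipschitz distortion with a controlled constant. A secondary point is that $g$ is only in $L^p$, not continuous, so the averaged integral estimates must be read as "for a.e.\ $z$", and one needs $p>1$ nowhere in particular — only the finiteness of $\M g^p$ at the two endpoints, which by Hölder controls $\M g$ along the relevant scales — so the constant $C$ can be taken to depend only on the bilipschitz and doubling data of $M$, hence is universal as claimed. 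Finally, $\gamma\notin\Gamma_0$ because $\int_\gamma g<\infty$, while every curve in $\Gamma_0$ has $\int g=\infty$.
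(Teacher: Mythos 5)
Your proposal is correct in outline, but it takes a genuinely different route from the paper. The paper's proof is a short modulus argument: by Lemma 4.5 in \cite{teri2} and Keith's pointwise Poincar\'e estimate (Theorem 2(4) in \cite{kei03}) one has the lower bound $d(x,y)^{1-p}\le C\Mod_p(\Gamma_{xy}\setminus\Gamma_g;\mu_{xy})$ with respect to a two-sided Riesz-potential measure $\mu_{xy}$. In particular the family of curves joining $x$ to $y$ along which $g$ is integrable is nonempty, and if every such curve had length $\ge Dd(x,y)$ then the constant density $\rho=1/(Dd(x,y))$ would be admissible, forcing $\Mod_p(\Gamma_{xy}\setminus\Gamma_g;\mu_{xy})\le CD^{-p}d(x,y)^{1-p}$; comparing with the lower bound bounds $D$. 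You instead construct the curve by hand via polar-coordinate Fubini and the Hardy--Littlewood maximal function, which in effect re-derives the relevant piece of the modulus lower bound. The advantage of your route is that it is elementary and self-contained, exhibiting an explicit two-segment curve; the paper's route is much shorter but outsources the analytic content to the cited modulus estimate. Both deliver a constant $C$ depending only on the doubling/bilipschitz data of $M$.

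Two points in your sketch deserve tightening. First, for the ``two majority sets intersect'' step you need both good sets to sit inside the same sphere $\partial B(x,\rho)$ with comparable surface-measure lower bounds. This requires choosing $\rho$ somewhat larger than $d$ (say $\rho=2d$) so that $z\mapsto(z-y)/|z-y|$ is bilipschitz from $\partial B(x,\rho)$ onto its image in $S^{n-1}$ (note $|z-y|\in[\rho-d,\rho+d]$ stays comparable to $\rho$), and then the Fubini estimate for $\int_{\partial B(x,\rho)}\int_{[z,y]}g\,d\sigma(z)\lesssim\rho^n\,\M g(y)$ goes through by the same dyadic-annulus computation you used for $x$. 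Second, the reduction to $d(x,y)<\delta_0$ by concatenation needs the intermediate junction points to lie off the set $\{\M g^p=\infty\}$, which is a null set since $g\in L^p$, so this is fine but should be said. With those details filled in the argument is complete, and, as you observe, $\M g(x)<\infty$ follows from $\M g^p(x)<\infty$ by H\"older, while $\gamma\notin\Gamma_0$ is immediate from $\int_\gamma g<\infty$.
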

\begin{proof}
By Lemma 4.5 in \cite{teri2} and Theorem 2 (4) in \cite{kei03} we have \[ d(x,y)^{1-p}\le C\Mod_p(\Gamma_{xy}\setminus \Gamma_g; \mu_{xy}), \] where \[ \mu_{xy}(A)=\int_A\left[\frac{d(x,z)}{\mu(B(x,d(x,z)))}+\frac{d(y,z)}{\mu(B(y,d(y,z)))} \right]\ud\mu(z), \ A\subset X. \] In particular $\Gamma_{xy}\setminus \Gamma_g$ is nonempty. Note that $\Gamma_0\subset \Gamma_g$.

If $\ell(\gamma)\ge Dd(x,y)$ for all $\gamma\in \Gamma_{xy}\setminus \Gamma_g\subset \Gamma_{xy}\setminus \Gamma_0$ then $\rho=1/(Dd(x,y))$ is admissible  for $\Gamma_{xy}\setminus \Gamma_g$ and thus \[ \Mod_p(\Gamma_{xy}\setminus \Gamma_g; \mu_{xy})\le CD^{-p}d(x,y)^{1-p}. \]Combining the two inequalitites yields the required bound on $D$.
\end{proof}
	
\begin{lemma}\label{loop}
	Let $p\ge 2$, and $u\in \Wem{p}{M}{N}$ be a quasicontinuous representative. Given an upper gradient $g$ of $u$, a path family $\Gamma_0$ of zero $p$-modulus, and a point $x_0\notin \spt_p\Gamma_0$ with $\M g^p(x_0)<\infty$,  we have \[ u_\ast\pi(M,x_0)=u_\sharp \F_{x_0}(g,\Gamma_0). \]
\end{lemma}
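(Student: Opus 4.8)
The statement asserts that the subgroup of $\pi(N,u(x_0))$ generated by the images of loops in the fundamental system $\F_{x_0}(g,\Gamma_0)$ coincides with the image of the induced homomorphism $u_\ast$. The plan is to prove the two inclusions separately, and in both directions the key mechanism is that loops built from ``good'' curves (those avoiding $\Gamma_0$ and running between points of finite maximal function) carry the same homotopy-theoretic information as $u_\ast$ applied to arbitrary loops in $M$.

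First I would establish the inclusion $u_\sharp\F_{x_0}(g,\Gamma_0)\subset u_\ast\pi(M,x_0)$. A generic element of $\F_{x_0}(g,\Gamma_0)$ is a loop $\alpha\beta^{-1}$ where $\alpha,\beta\in\Gamma_{x_0x}\setminus\Gamma_0$ and $x$ satisfies $\M g^p(x)<\infty$. Since $g$ is an upper gradient of $u$ along $\alpha$ and $\beta$ and both endpoints have finite maximal function, $u\circ\alpha$ and $u\circ\beta$ are continuous curves in $N$ (this is the standard ACC$_p$/Morrey-type estimate: an $L^p$ upper gradient with finite maximal function at the endpoint forces local continuity of $u$ along the curve, $p\ge 2$ being more than enough in dimension $n$ for the exponent to be supercritical along a $1$-dimensional curve). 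Hence $u\circ(\alpha\beta^{-1})$ is an honest loop in $N$ based at $u(x_0)$, and by the first defining property of the induced homomorphism it equals $u_\ast$ of the corresponding loop class $[\alpha\beta^{-1}]\in\pi(M,x_0)$ — one must check $u\circ\alpha$ is continuous to invoke this, which we just did. So every generator of $u_\sharp\F_{x_0}$ lies in $u_\ast\pi(M,x_0)$.

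For the reverse inclusion $u_\ast\pi(M,x_0)\subset u_\sharp\F_{x_0}(g,\Gamma_0)$, take an arbitrary $[\gamma]\in\pi(M,x_0)$. By the second property of the induced homomorphism, $u_\ast[\gamma]=[u\circ\gamma']$ for some $\gamma'\sim\gamma$. The idea is to subdivide $\gamma'$ into short arcs $\gamma'=\sigma_1\sigma_2\cdots\sigma_m$ with consecutive endpoints $x_0=p_0,p_1,\dots,p_m=x_0$, where — after a small perturbation using Lemma \ref{disse} — each $p_i$ can be taken with $\M g^p(p_i)<\infty$ and $p_i\notin\spt_p\Gamma_0$, and each $\sigma_i$ replaced by a curve $\tau_i\notin\Gamma_0$ joining $p_{i-1}$ to $p_i$ with controlled length, close enough to $\sigma_i$ to be homotopic to it rel endpoints inside a small ball (convexity radius). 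Then $\gamma'\sim\tau_1\cdots\tau_m$ rel basepoint, and telescoping with fixed connecting curves $\omega_i\in\Gamma_{x_0p_i}\setminus\Gamma_0$ (which exist by Lemma \ref{disse}, again choosing $p_i\notin\spt_p\Gamma_0$) we write $[\tau_1\cdots\tau_m]$ as a product of classes $[\omega_{i-1}\tau_i\omega_i^{-1}]$, each of which lies in $\F_{x_0}(g,\Gamma_0)$ because $\omega_{i-1}\tau_i$ and $\omega_i$ both avoid $\Gamma_0$ and run between points of finite maximal function. Applying $u_\ast$ and using that $u$ composed with each of these good loops is continuous (so $u_\ast$ agrees with $u_\sharp$ on them), we get $u_\ast[\gamma]\in u_\sharp\F_{x_0}(g,\Gamma_0)$.

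\textbf{Main obstacle.} The delicate point is the subdivision-and-perturbation step: one must simultaneously arrange that the intermediate points $p_i$ avoid the exceptional set $\spt_p\Gamma_0$ and the bad set $\{\M g^p=\infty\}$, that the replacement curves $\tau_i$ and connecting curves $\omega_i$ avoid $\Gamma_0$ with length control, and that all the small homotopies take place in regions where $u\circ(\cdot)$ behaves — i.e. on curves along which $g$ is a genuine upper gradient of $u$. Here Lemma \ref{disse} is exactly the tool that guarantees good curves exist between any two points off $\{\M g^p=\infty\}$, with a uniform length bound, so that the perturbed polygon stays in small balls and the homotopy $\gamma'\sim\tau_1\cdots\tau_m$ is legitimate; and the fact that both $\spt_p\Gamma_0$ and $\{\M g^p=\infty\}$ are ``small'' (the latter having measure zero, the former by definition of the support of a negligible family) ensures the $p_i$ can be chosen generically. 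Making the homotopy-invariance of $u_\ast$ interact correctly with these choices — in particular checking that the value $u_\ast[\gamma]$ computed via $\gamma'$ matches the value computed via the good polygon — is the part that requires care rather than a routine estimate.
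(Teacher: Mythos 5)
Your proof is correct and follows essentially the same strategy as the paper: the nontrivial inclusion is obtained by replacing a given loop, after a fine subdivision with division points perturbed off the measure-zero bad set $\{\M g^p=\infty\}\cup\spt_p\Gamma_0$, by a polygon of good curves of controlled length furnished by Lemma \ref{disse}, homotopic to the original loop by a smallness/tubular-neighborhood argument. The differences are cosmetic---the paper fits the chain of balls, dilated by the constant $C$, inside a tubular neighborhood $T$ of $\gamma$ in which every loop is homotopic to $\gamma$, and reads the whole polygon as a single element of $\F_{x_0}(g,\Gamma_0)$, whereas you telescope it into a product of generators $[\omega_{i-1}\tau_i\omega_i^{-1}]$; also your initial invocation of the second defining property of $u_\ast$ to produce $\gamma'\sim\gamma$ is an unnecessary detour, since one may subdivide $\gamma$ directly and conclude via $u_\ast[\gamma]=u_\ast[\tau_1\cdots\tau_m]$.
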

\begin{proof}
Let $u\in \Wem pMN$ and let $g,\Gamma_0$ be as in the claim. Set $$E=\{x_0: \M g^p(x_0)=\infty \}\cup \spt\Gamma_0$$ and choose and arbitrary point $x_0\notin E$. For any $\gamma\in \F_{x_0}(g,\Gamma_0)$ clearly $[u\circ \gamma]\in u_\ast\pi(M,x_0)$. Thus we only need to prove the other inclusion.

To this end, fix a loop $\gamma$ based on $x_0$. Take a tubular neighbourhood $T$ of $\gamma$ so that any loop in $T$ is homotopic with $\gamma$. Take a finite chain of open balls $x_0\in B_0,B_1,\ldots,B_k$ of radii $r>0$ such that $2C\overline B_j\subset T$, and $B_j\cap B_{j+1}\ne \varnothing$, where $C$ is the constant in Lemma \ref{disse}. Since $|E|=0$ there exists, for each $j$, points $y_j\in (B_j \cap B_{j+1})\setminus E$ (with the convention that $y_0=x_0$ and $y_k\in (B_0\cap B_k)\setminus E$.) 

By Lemma \ref{disse} there exists a curve $\gamma_j\notin \Gamma_0$ joining $y_j$ and $y_{j+1}$ with $\ell(\gamma_j)\le Cd(y_j,y_{j+1})$ (here $y_{k+1}=x_0$). Hence $|\gamma_j|\subset T$. The loop $\gamma'=\gamma_0\cdots\gamma_{k+1}$ belongs to $\F_{x_0}(g,\Gamma_0)$ and is contained in $T$, and therefore homotopic with $\gamma$.

It follows that $[u\circ \gamma']=u_\ast[\gamma']=u_\ast[\gamma]$ and since $\gamma$ was arbitrary we obtain $u_\ast\pi(M,x_0)\le u_\sharp\F_{x_0}(g,\Gamma_0)$. The proof is complete.
\end{proof}

\begin{lemma}\label{pconj}
	Let $p\ge 2$. Two maps, $u,v\in \Wem pMN$, are $p$-quasihomotopic if and only if $ u_\sharp\pi(M)$ and $v_\sharp\pi(M)$ are conjugated subgroups of $\pi(N)$.
\end{lemma}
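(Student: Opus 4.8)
The plan is to reduce the statement to the behavior of induced homomorphisms and then invoke the machinery already assembled in Section 3, in particular Lemma \ref{loop} together with the diagonal-cover arguments of \cite{teri2}. First I would settle the easy direction: if $u$ and $v$ are $p$-quasihomotopic, then for some exceptional set $E$ of arbitrarily small $p$-capacity the restrictions $u|_{M\setminus E}$ and $v|_{M\setminus E}$ are genuinely homotopic via a continuous $H$. Picking a basepoint $x_0\notin E$ with $\M g^p(x_0)<\infty$ and using the path $t\mapsto H(x_0,t)$ to identify $\pi(N,u(x_0))$ with $\pi(N,v(x_0))$, one sees that $u_\ast\pi(M,x_0)$ and $v_\ast\pi(M,x_0)$ are carried to one another; since moving the basepoint conjugates these subgroups, $u_\sharp\pi(M)$ and $v_\sharp\pi(M)$ are conjugate in $\pi(N)$. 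Here one has to be slightly careful that the induced homomorphism on $\pi(M,x_0)$ computed from $u|_{M\setminus E}$ agrees (up to the canonical identifications) with the Sobolev-theoretic $u_\ast$ from \cite{sch79}; this is where Lemma \ref{loop} is used, identifying $u_\ast\pi(M,x_0)$ with $u_\sharp\F_{x_0}(g,\Gamma_0)$, a description that only sees loops avoiding a capacity-null set and hence is insensitive to enlarging $E$.

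For the converse — the substantive direction — suppose $u_\sharp\pi(M)$ and $v_\sharp\pi(M)$ are conjugate subgroups of $\pi(N)$. By Theorem \ref{molli} we may first replace $u$ and $v$ by $n$-quasihomotopic (hence, by the easy direction, conjugacy-preserving) Lipschitz maps, so assume $u,v\in \Lip(M;N)$; note that when $p>n$ Lipschitz regularity is automatic, and when $p<n$ one argues instead directly from Theorem 1.4 in \cite{teri1} after matching $(\lfloor p\rfloor-1)$-homotopy types, so the heart of the matter is $p\ge 2$ with $p=n$ or $p>n$. Now the conjugacy of the images means precisely that the map $(u,v):M\to N\times N$ has its fundamental-group image contained (after conjugation, i.e. after composing with a deck transformation / rechoosing the basepoint in the cover) in the image of $\pi_1(\widehat N_{diag})$, where $\widehat N_{diag}$ is the diagonal cover of $N$ from \cite[Subsection 2.4]{teri2}. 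Standard covering-space theory then yields a continuous lift $h:M\to \widehat N_{diag}$ of $(u,v)$, and since $N$ is aspherical, $\widehat N_{diag}$ is aspherical as well, so by Proposition 3.2 in \cite{teri2} the existence of such a lift forces $u$ and $v$ to be homotopic. A homotopy of Lipschitz maps is in particular a path homotopy, and finally Theorem \ref{molli} (applied to interpolate back from $u,v$ to their Lipschitz approximants) upgrades this to a $p$-quasihomotopy.

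The main obstacle I anticipate is the bookkeeping around basepoints and the exceptional set in the easy direction: $u_\ast$ and $v_\ast$ a priori live on $\pi(M,x_0)$ only for \emph{almost every} $x_0$, and are only defined up to the ambiguity of choosing a representative loop avoiding a capacity-null family, so one must verify that the conjugacy relation is genuinely well defined and basepoint-independent and matches the lift-theoretic statement used in the converse. Lemma \ref{loop} is exactly the tool that makes these identifications canonical, and the plan hinges on applying it with a common upper gradient $g$ and exceptional family $\Gamma_0$ for both $u$ and $v$ (take $g = g_u + g_v$ and $\Gamma_0$ the union of the two exceptional families). A secondary technical point is checking that the diagonal cover $\widehat N_{diag}$ is the correct space whose $\pi_1$-image encodes "the images $u_\sharp\pi(M)$ and $v_\sharp\pi(M)$ are simultaneously conjugate," but this is precisely the content of the construction in \cite[Subsection 2.4]{teri2} and should go through without new ideas.
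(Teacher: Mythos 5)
Your two-direction scheme diverges substantially from the paper's proof, and the "converse" direction has a genuine gap for $2\le p<n$.

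The paper does not argue the two implications separately. It goes straight to the clean equivalence in \cite[Theorems 1.2 and 1.3]{teri2}: $u$ and $v$ are $p$-quasihomotopic if and only if $(u,v)_\sharp\F_{x_0}(g,\Gamma_0)\le p_\ast\pi(\widehat{N}_{diag},[\alpha])$ for some $x_0$ and some $[\alpha]$. Then Lemma \ref{loop} (the hypothesis $p\ge 2$ enters here) identifies $(u,v)_\sharp\F_{x_0}(g,\Gamma_0)$ with $(u,v)_\ast\pi(M,x_0)$, and a direct computation of $p_\ast\pi(\widehat N_{diag},[\alpha])$ as the twisted diagonal $\{([\gamma],[\alpha^{-1}\gamma\alpha])\}$ turns the lifting condition into exactly the pointwise conjugation $u_\ast[\gamma]=[\alpha]^{-1}v_\ast[\gamma][\alpha]$. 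This single argument handles all $p\ge 2$ uniformly and requires no regularization to Lipschitz representatives. Your lifting-plus-\cite[Proposition~3.2]{teri2} route for the substantive direction is essentially a re-derivation of one half of that equivalence in the case where both maps are continuous, which is why you were forced to mollify first; the paper avoids the detour entirely because Theorem 1.2 of \cite{teri2} already produces the quasicontinuous lift without passing to Lipschitz maps.

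The gap: for $2\le p<n$, Theorem \ref{molli} is not available (it is a statement about $W^{1,n}$), and your fallback — invoking \cite[Theorem~1.4]{teri1} "after matching $(\lfloor p\rfloor-1)$-homotopy types" — points in the wrong direction. That theorem gives $p$-quasihomotopy $\Rightarrow$ path homotopy; what you need here is conjugacy $\Rightarrow$ $p$-quasihomotopy. Showing that conjugate images force $1$-homotopy (which is what matching $(\lfloor p\rfloor-1)$-types amounts to for $p\in[2,3)$) and then invoking Hang--Lin only produces path homotopy, and for $p<n$ path homotopy does \emph{not} imply $p$-quasihomotopy — indeed for aspherical targets this implication in the range $p\ge 2$ is precisely the content of the lemma you are trying to prove, so the argument as sketched is circular there. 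The right tool to close this is exactly \cite[Theorems 1.2--1.3]{teri2}, valid for all $1<p<\infty$ on the Sobolev level, which the paper's proof uses from the start.

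A secondary imprecision: your easy direction only records that the image subgroups are carried to one another as sets, but the lifting criterion to $\widehat N_{diag}$ used in your converse requires the stronger pointwise statement $v_\ast[\gamma]=[\alpha]^{-1}u_\ast[\gamma][\alpha]$ for every $[\gamma]$. The paper's proof (and its use of the term ``conjugated'') works with the pointwise form throughout, so this should be stated explicitly rather than slipped between ``conjugate subgroups'' and ``conjugate homomorphisms.''
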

\begin{proof}
By \cite[Theorem 1.2 and 1,3]{teri2} the maps $u,v$ are $p$-quasihomotopic if and only if
\begin{equation}\label{eq z}
(u,v)_\sharp\F_{x_0}(g,\Gamma_0)\le p_\ast\pi(\widehat{N}_{diag},[\alpha])
\end{equation} for some $[\alpha]\in p^{-1}(u(x_0),v(x_0))$, and some $x_0\in M$. Here $(p,\widehat{N}_{diag})$ is the \emph{diagonal cover} of $N$ which consists of homotopy classes of all paths in $N$ (see \cite{teri2} for the precise construction). A modification of the proof of \cite[Lemma 2.18]{teri2} yields \[ p_\ast\pi(\widehat{N}_{diag},[\alpha])=\{([\gamma],[\alpha^{-1}\gamma \alpha]): [\gamma]\in \pi(N,u(x_0)) \}\le \pi(N,u(x_0))\times\pi(N,v(x_0)). \] On the other hand by Lemma \ref{loop} \[ (u,v)_\sharp\F_{x_0}(g,\Gamma_0)= (u,v)_\ast\pi(M,x_0)=\{ (u_\ast[\gamma],v_\ast[\gamma]): [\gamma]\in \pi(M,x_0)\}. \] By these two identities (\ref{eq z}) is equivalent to \[ u_\ast[\gamma]=[\alpha]^{-1}v_\ast[\gamma][\alpha]  \] for all $[\gamma]\in \pi(M,x_0)$. Hence we are done.
\end{proof}

\begin{lemma}\label{pathconj}
	If $u,v\in \Wem pMN$ are path homotopic ($p\ge 2$) then for almost every $x_0\in M$ $u_\ast\pi(M, x_0)$ and $v_\ast\pi(M, x_0)$ are conjugated. 
\end{lemma}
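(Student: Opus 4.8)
\textbf{Proof plan for Lemma \ref{pathconj}.}

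The plan is to reduce the statement to Lemma \ref{pconj} via a connectedness argument along the path homotopy. First I would recall that, since $u$ and $v$ are path homotopic, there is a continuous path $h\in C([0,1];\Wem pMN)$ with $h_0=u$, $h_1=v$. For almost every $x_0\in M$ each map $h_t$ has a well-defined induced homomorphism $(h_t)_\ast\colon\pi(M,x_0)\to\pi(N,h_t(x_0))$ (using $p\ge 2$); the subtlety is that the exceptional set of "bad" basepoints may depend on $t$, so I would first fix attention on a single $x_0$ lying outside the bad set for $u$, for $v$, and — by a Fubini-type argument over the compact parameter interval, using that $h$ is continuous into $\Wem pMN$ and hence the relevant maximal functions of upper gradients are controlled — outside the bad set for $h_t$ for a.e. $t$, and then pass to all $t$ by continuity of $t\mapsto h_t(x_0)$ together with stability of the induced homomorphism under small $\Wem pMN$-perturbations.

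Next I would run a continuity/open-closed argument on the parameter $t$. Define $S\subset[0,1]$ to be the set of $t$ for which $(h_t)_\ast\pi(M,x_0)$ is conjugate (in $\pi(N)$, suitably interpreted via a chosen path from $h_0(x_0)$ to $h_t(x_0)$) to $u_\ast\pi(M,x_0)$. It is nonempty since $0\in S$. To see it is open and closed I would use the local constancy of the induced homomorphism: by Theorem \ref{white}-type stability (more precisely by the stability of the induced homomorphism under $\Wem pMN$-convergence, \cite{sch79, whi88}), there is for each $t_1$ a neighbourhood of $t_1$ in $[0,1]$ on which $(h_t)_\ast$ agrees with $(h_{t_1})_\ast$ up to the canonical identification coming from the short path $s\mapsto h_s(x_0)$. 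Hence the conjugacy class of $(h_t)_\ast\pi(M,x_0)$ is locally constant in $t$, so $S$ is both open and closed; since $[0,1]$ is connected, $S=[0,1]$, and in particular $1\in S$, giving that $v_\ast\pi(M,x_0)=(h_1)_\ast\pi(M,x_0)$ is conjugate to $u_\ast\pi(M,x_0)$.

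The main obstacle I anticipate is the bookkeeping of basepoints and of the identifications of fundamental groups at the moving points $h_t(x_0)$: one must be careful that "conjugate" is a well-defined relation, which requires choosing the path $s\mapsto h_s(x_0)$ in $N$ consistently and checking that changing this path only changes the relevant subgroup of $\pi(N)$ by an inner automorphism, so that the conjugacy class is unambiguous. A secondary technical point is ensuring the single good basepoint $x_0$ can be chosen to work simultaneously for a.e. $t$ and then for all $t$; this is where continuity of $h$ into $\Wem pMN$ and a measure-theoretic argument (controlling $\{\M g_t^p=\infty\}$ uniformly enough along the path) are needed, and it is cleanest to invoke the local stability of induced homomorphisms rather than to track the exceptional sets explicitly. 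Once these identifications are pinned down, the open-closed argument is routine and the lemma follows.
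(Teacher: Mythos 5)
Your proposal takes a genuinely different route from the paper. The paper splits into two cases. For $p<n$ it invokes the Hang--Lin theorem as a black box to get that $u$ and $v$ are $(\lfloor p\rfloor-1)$-homotopic (in particular $1$-homotopic since $p\ge 2$), then fixes a $1$-skeleton $K$ through a good basepoint $x_0$, uses cellular approximation to push any loop into $K$, and writes down the conjugating element explicitly as $[\alpha]$ where $\alpha(t)=h(x_0,t)$ for the skeleton-level homotopy $h$. For $p\ge n$ the paper instead combines Theorem \ref{main} (path homotopy implies $n$-quasihomotopy) and the $p>n$ equivalence to conclude $u,v$ are $p$-quasihomotopic, then applies Lemma \ref{pconj}. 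Your open--closed argument along the path $t\mapsto h_t$ avoids both the case split and the use of Theorem \ref{main}, which would make the lemma independent of the paper's own critical-exponent result; this is an aesthetic gain. In effect you are re-deriving the forward direction of Hang--Lin (path homotopy $\Rightarrow$ $1$-homotopy at the level of $\pi_1$-images) from White's stability, rather than citing the equivalence.

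The gap is in the stability ingredient. You cite \cite{sch79, whi88} for ``stability of the induced homomorphism under $\Wem pMN$-convergence,'' but neither reference states that fact in the form you need. What \cite{whi88} proves is stability of the $(\lfloor p\rfloor-1)$-homotopy type under weak convergence; converting that into the local constancy (along the path) of the conjugacy class of $(h_t)_\ast\pi(M,x_0)$, together with a coherent moving-basepoint identification, is precisely the content one has to supply — and doing it carefully already reproduces most of the skeleton-plus-cellular-approximation argument the paper gives explicitly. Likewise, the Fubini step to find a single $x_0$ outside the bad set of $h_t$ for a.e.\ $t$ is not automatic, since the exceptional sets $\{\M g_t^p=\infty\}$ vary with $t$ in a way that continuity of $t\mapsto h_t$ in $\Wem pMN$ does not immediately control; the cleaner fix is to work with conjugacy classes of image subgroups (which are basepoint-independent) rather than with homomorphisms at a fixed $x_0$, but your sketch does not say this. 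So the plan is sound in outline, but the two steps you flag as ``bookkeeping'' are in fact the substance of the proof, and as written they would need to be fleshed out before the open--closed argument can run.
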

\begin{proof}
	Suppose first that $p<n$. Then by \cite[Theorem 1.1]{han03} $u$ and $v$ are $[p-1]$-homotopic and, since $p\ge 2$, in particular $1$-homotopic. Fix a $1$-skeleton $K$ of $M$ containing a point $x_0\in \{\M (|Du|^p+|Dv|^p)<\infty \}$, and such that $u|_{K}$ and $v|_{K}$ are (continuous and) homotopic by a homotopy $h:K\times [0,1]\to N$.
	
	To prove that the image subgroups of the homomorphisms are conjugated, take a loop $\gamma$ with basepoint $x_0$. By \cite[Section 4.1, Theorem 4.8]{hat02} $\gamma$ is homotopic to a loop $\gamma'$ which lies in $K$. Thus the image loops $u\circ\gamma'$ and $v\circ\gamma'$ are conjugated by $$H(s,t)=h(\gamma(s),t),\ t,s\in [0,1]^2.$$ Denoting by $\alpha$ the path $t\mapsto h(x_0,t)$ we thus have $$[u\circ\gamma']=[\alpha^{-1}(v\circ\gamma')\alpha].$$ Consequently $$u_\ast([\gamma])=u_\ast([\gamma'])=(v_\ast([\gamma']))^{[\alpha]}=(v_\ast([\gamma]))^{[\alpha]}, \ [\gamma]\in \pi(M,x_0).$$ This proves the claim in the case $p<n$.
	
	In case $p\ge n$ it follows from Theorem \ref{main} and Theorem \ref{homotopy} that $u$ and $v$ are $p$-quasihomotopic. The claim now follows from Lemma \ref{pconj} above.	
\end{proof}
\noindent Combining Proposition \ref{pqpath} and Lemmata \ref{pconj} and \ref{pathconj} we obtain the following theorem, which directly implies Theorem \ref{main2}.

\begin{theorem}\label{main3}
	Let $p\ge 2$, and $N$ aspherical. Then two maps $u,v\in \Wem pMN$ are path homotopic if and only if the subgroups $u_\ast\pi(M)$ and $v_\ast\pi(M)$ are conjugated.
\end{theorem}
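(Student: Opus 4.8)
The plan is to establish Theorem \ref{main3} by combining the three results collected just above it, with the asphericity of $N$ entering as the bridge between the group-theoretic condition and $p$-quasihomotopy. First I would treat the forward implication. Assume $u,v\in \Wem pMN$ are path homotopic. Lemma \ref{pathconj} applies directly and gives, for almost every $x_0\in M$, that the subgroups $u_\ast\pi(M,x_0)$ and $v_\ast\pi(M,x_0)$ are conjugated in $\pi(N)$. This is already the conclusion in one direction, and it does not even use the asphericity hypothesis; the only subtlety is the standard one that ``$u_\ast\pi(M)$'' is only defined up to conjugacy and up to the almost-everywhere choice of basepoint, so I would remark that the statement should be read as: there is a full-measure set of basepoints for which the induced homomorphisms exist and have conjugated images.

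For the converse, suppose that $u_\ast\pi(M)$ and $v_\ast\pi(M)$ are conjugated subgroups of $\pi(N)$. Pick a basepoint $x_0$ in the full-measure set where both induced homomorphisms are defined and the conjugacy holds, and where moreover $\M(|Du|^p+|Dv|^p)(x_0)<\infty$ so that $x_0$ lies off the support of a suitable exceptional path family for $(u,v)$. By Lemma \ref{pconj}, the maps $u$ and $v$ are $p$-quasihomotopic precisely when $u_\sharp\pi(M)$ and $v_\sharp\pi(M)$ are conjugated subgroups of $\pi(N)$ --- and by Lemma \ref{loop} the ``sharp'' image subgroup $u_\sharp\F_{x_0}(g,\Gamma_0)$ coincides with the ``ast'' image subgroup $u_\ast\pi(M,x_0)$ (and likewise for $v$). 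Hence the hypothesis is exactly the condition in Lemma \ref{pconj}, and we conclude that $u$ and $v$ are $p$-quasihomotopic. Finally, invoking Theorem \ref{pqpath} --- which uses that $N$ is aspherical --- $p$-quasihomotopic maps are path homotopic, completing the converse.

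The main point to be careful about, and the place where asphericity is genuinely needed, is this last step: the equivalence ``conjugated $\iff$ $p$-quasihomotopic'' of Lemma \ref{pconj} holds for \emph{any} compact $N$ with $p\ge 2$, but to upgrade $p$-quasihomotopy to path homotopy one must rule out the phenomenon behind Corollary \ref{sn}, where $n$-homotopic (hence, one can arrange, $p$-quasihomotopic) maps into a rational homology sphere fail to be path homotopic. Asphericity kills higher homotopy, so the diagonal cover $\widehat N_{diag}$ is contractible and the lifting argument in the proof of Theorem \ref{pqpath} produces an honest continuous lift of $(f,g)\colon M\to N\times N$; projecting this lift yields the path homotopy. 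I expect the bookkeeping of basepoints and exceptional sets --- ensuring a single $x_0$ works simultaneously for the conjugacy hypothesis, for Lemma \ref{loop}, and for Lemma \ref{pconj} --- to be the only real friction, and it is routine since each condition excludes only a set of measure zero (indeed of zero $p$-capacity for the $\M g^p=\infty$ part). Thus the theorem follows, and Theorem \ref{main2} is the special case obtained by reading off both directions.
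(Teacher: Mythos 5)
Your argument matches the paper's proof exactly: the forward direction invokes Lemma \ref{pathconj}, and the converse chains Lemma \ref{pconj} (conjugacy $\Rightarrow$ $p$-quasihomotopy) with Theorem \ref{pqpath} (which is where asphericity is used). The extra commentary on basepoints and on Lemma \ref{loop} is merely unwinding content already packaged into Lemma \ref{pconj}, so this is the same route, correctly executed.
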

\begin{proof}
	Suppose $u,v$ are path homotopic. Then Lemma \ref{pathconj} implies the claim. If, conversely, $u_\ast\pi(M)$ and $v_\ast\pi(M)$ are conjugated, Lemma \ref{pconj} implies that $u$ and $v$ are $p$-quasihomotopic. By Proposition \ref{pqpath} $u$ and $v$ are path homotopic.
\end{proof}

\section{Quasiconnectedness of $\Wem pM{S^k}$}

In this section the following result is proven.
	\begin{proposition}\label{mn}
		Suppose $M$ is a smooth compact riemannian manifold, possibly with boundary, and $1<p\le k$. Then $u\in \Wem pM{S^k}$ is $p$-quasiconnected, i.e. every map is $p$-quasihomotopic to a constant.
	\end{proposition}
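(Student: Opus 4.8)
The plan is to show that any $u\in \Wem pM{S^k}$ is $p$-quasihomotopic to a constant by first reducing to a Lipschitz (or at least continuous on a large set) representative and then using the fact that $p\le k$ forces the relevant homotopy obstructions to vanish. More precisely, the first step is to invoke the machinery of Section 2: by Theorem \ref{molli} the map $u$ is $p$-quasihomotopic to one of the mollified maps $u_{r_0}$, which is Lipschitz. (Strictly speaking Theorem \ref{molli} is stated for $p=n$; for $p<n$ one should instead quote the known fact from \cite{teri1} that a Sobolev map is $p$-quasihomotopic to a Lipschitz approximation, or run the same mollification argument, noting that $k_r\ast |Du|\to 0$ $p$-quasiuniformly by Lemmas \ref{unibound} and \ref{quasiuni}.) Thus it suffices to prove the statement for a Lipschitz map $u:M\to S^k$.

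For a Lipschitz map the problem becomes essentially topological, modulated by capacity. The key point is that $p\le k$ means a set of zero $p$-capacity can have Hausdorff dimension as large as $n-p \ge n-k$, and — crucially — the restriction of $u$ to a generic $d$-skeleton with $d=\lceil p\rceil - 1 < k$ carries no obstruction to nullhomotopy, since $\pi_j(S^k)=0$ for $0<j<k$ and $j = d < k$. So the second step is: choose a triangulation of $M$ and let $E$ be (a neighborhood of) the union of all cells of dimension $\ge k$ in the dual decomposition, i.e. the complement of a neighborhood of the $(k-1)$-skeleton. Such an $E$ can be taken to have arbitrarily small $p$-capacity (this is the standard fact that the $(n-k)$-dimensional dual skeleton, fattened slightly, has small $p$-capacity when $p\le k$; cf. the estimate $\Cp_p(E)\le c\,\Ha^{n-q}_\infty(E)$ quoted in the introduction). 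On $M\setminus E$, which deformation retracts onto the $(k-1)$-skeleton $K$, the map $u|_K$ is nullhomotopic because it factors up to homotopy through a map from a $(k-1)$-complex into $S^k$, and every such map is nullhomotopic since $\pi_j(S^k)=0$ for $j\le k-1$.

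The third step is to assemble these choices into a single map $H:M\times[0,1]\to N$ witnessing the quasihomotopy. Given $\varepsilon>0$, pick the triangulation fine enough that the associated $E=E_\varepsilon$ has $\Cp_p(E)<\varepsilon$; on $M\setminus E$ use the nullhomotopy of $u|_{M\setminus E}$ to a constant constructed above. One must check the compatibility condition in the definition of $p$-quasihomotopy: the homotopies for different $\varepsilon$ must be restrictions of one global map $H$. The cleanest way is to build $H$ once and for all using an exhaustion — take a decreasing sequence $E_1\supset E_2\supset\cdots$ with $\Cp_p(E_j)\to 0$, define $H$ on $M\setminus \bigcap_j E_j$ (a set of zero $p$-capacity complement, hence all of $M$ up to a $p$-null set, which is where quasicontinuity lives) by patching the nullhomotopies, using that on each $M\setminus E_j$ the relevant homotopy is already continuous and they can be chosen consistently because $S^k$ is simply connected in the relevant range — alternatively, following the pattern of \cite{teri1}, one constructs $H$ as a limit of the skeleton-wise homotopies.

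The main obstacle I anticipate is precisely this last gluing/consistency step: producing a \emph{single} map $H$ on $M\times[0,1]$ whose restriction to $(M\setminus E_\varepsilon)\times[0,1]$ is continuous for every $\varepsilon$ simultaneously, rather than merely a separate homotopy for each $\varepsilon$. This requires organizing the nullhomotopies on the nested skeleta coherently — most likely by choosing the triangulations to be successive refinements and arguing that the deformation retractions of $M\setminus E_{j+1}$ onto finer skeleta extend those on $M\setminus E_j$, or by invoking a general position / simplicial approximation argument to make the whole tower of homotopies compatible. The capacity estimate for the fattened dual skeleton (Step 2) is standard but should be stated carefully, as it is where the hypothesis $p\le k$ enters decisively.
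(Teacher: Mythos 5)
Your approach is genuinely different from the paper's, and unfortunately it has two real gaps.

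\medskip\noindent\emph{What the paper actually does.} The paper makes no attempt to reduce to Lipschitz maps or to use skeletal decompositions. Instead it proves Lemma \ref{cap}: for a quasicontinuous $f\in \Wem pMN$ with $1<p\le\dim N$, for almost every $y\in N$ the preimage $f^{-1}(y)$ has zero $p$-capacity (this is exactly where $p\le k$ enters, via a logarithmic cutoff and a Fubini integration over $y$). Corollary \ref{capcor} upgrades this to $\Cp_p(f^{-1}B(y_0,r))\to 0$ as $r\to 0$ for a.e.\ $y_0$. The quasihomotopy is then the single explicit formula $H(x,t)=h(f(x),t)$ where $h(z,t)=(z-ty_0)/|z-ty_0|$ is the radial contraction of $S^k\setminus\{y_0\}$ onto $-y_0$. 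Because $H$ is one global map, its restriction to $(M\setminus E)\times[0,\infty]$ is automatically continuous once $E$ contains both the discontinuity set of $f$ and $f^{-1}B(y_0,r)$; there is nothing to glue. This sidesteps entirely the coherence problem you identify as your ``main obstacle.''

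\medskip\noindent\emph{Gap 1: the reduction to Lipschitz.} Theorem \ref{molli} and the mollification machinery of Section 2 are strictly for $p=n$. Lemma \ref{0} (Schoen--Uhlenbeck) controls $\dist(N,\varphi_r\ast u)$ by $\bigl(\int_{B_r}|Du|^n\bigr)^{1/n}$, which is small by absolute continuity of $|Du|^n\,\ud z$ exactly because the exponent matches the dimension; for $p<n$ the mollified map has no reason to stay near $N$ and $u_r=\pi(\varphi_r\ast u)$ is not even defined. There is moreover no ``known fact from \cite{teri1} that a Sobolev map is $p$-quasihomotopic to a Lipschitz approximation'' when $p<n$ --- quite the contrary, the paper's own example ($x\mapsto x/|x|$ in $\Wem p{B^2}{S^1}$, $1<p<2$) is a Sobolev map not $p$-quasihomotopic to any Lipschitz map. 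That example is outside the range $p\le k$, but it shows the assertion you are leaning on is false in general and would need a dedicated proof in the range $1<p\le k<n$, which you do not give.

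\medskip\noindent\emph{Gap 2: the gluing.} You correctly flag that the definition of $p$-quasihomotopy demands a \emph{single} $H:M\times[0,1]\to N$ whose restriction is continuous outside $E_\varepsilon$ for every $\varepsilon$. Building this coherently over a tower of refining triangulations, with the nullhomotopies on successive $(k-1)$-skeleta chosen compatibly, is precisely the hard part, and neither the appeal to simplicial approximation nor the exhaustion sketch resolves it. This is not a minor technicality: it is the content of the proposition. The paper's insight is that the target being a sphere allows one to write $H$ in closed form, making the coherence automatic, at the cost of having to show that the one bad point $y_0$ has a preimage of negligible capacity --- which is the genuine analytic content and the place where $p\le k$ is used.
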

	We single out the following corollary.
	\begin{corollary}\label{sn}
		Suppose $2 \le k$ and $1<p\le k$. Then any two maps in $\Wem p{S^k}{S^k}$ are $p$-quasihomotopic.
	\end{corollary}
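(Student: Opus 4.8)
\noindent\emph{Proof proposal.} The plan is to derive Corollary \ref{sn} as an immediate consequence of Proposition \ref{mn}, combined with the (essentially formal) fact that $p$-quasihomotopy is an equivalence relation on quasicontinuous maps in $\Wem pM{S^k}$. Concretely, I would first apply Proposition \ref{mn} with $M=S^k$: the hypotheses hold since $S^k$ is a smooth compact Riemannian manifold (without boundary), $k\ge 2$, and $1<p\le k$; hence every $u\in\Wem p{S^k}{S^k}$ is $p$-quasihomotopic to a constant map $c_u$.

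Next I would note that any two constant maps $S^k\to S^k$ are classically homotopic (as $S^k$ is path connected), and a classical homotopy is in particular a $p$-quasihomotopy with empty exceptional set. Thus, for any $u,v\in\Wem p{S^k}{S^k}$, writing $\simeq_p$ for $p$-quasihomotopy, we have the chain
\[
u\ \simeq_p\ c_u\ \simeq_p\ c_v\ \simeq_p\ v,
\]
and transitivity finishes the proof. In short, all the genuine content lies in Proposition \ref{mn}; the corollary is a formality once that proposition is available.

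The only point that genuinely requires verification — hence the (mild) main obstacle — is the transitivity of $\simeq_p$. Given quasihomotopies $H_1$ from $u$ to $w$ and $H_2$ from $w$ to $v$, one sets $H(x,t)=H_1(x,2t)$ for $t\le 1/2$ and $H(x,t)=H_2(x,2t-1)$ for $t\ge 1/2$; for a prescribed $\varepsilon>0$ one chooses open exceptional sets $E_1,E_2$ with $\Cp_p(E_j)<\varepsilon/2$ such that $H_j$ restricts to a continuous homotopy off $E_j$, and puts $E=E_1\cup E_2$. Subadditivity of $p$-capacity gives $\Cp_p(E)<\varepsilon$; the two halves of $H$ agree on $(M\setminus E)\times\{1/2\}$ (both equal $w|_{M\setminus E}$); and therefore $H|_{(M\setminus E)\times[0,1]}$ is a continuous homotopy between $u|_{M\setminus E}$ and $v|_{M\setminus E}$. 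Reflexivity (take $H$ constant in $t$) and symmetry (replace $t$ by $1-t$) are immediate, so $\simeq_p$ is an equivalence relation and the displayed three-step chain yields the corollary.
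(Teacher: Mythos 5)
Your proposal is correct and matches the paper's intent: the paper presents Corollary \ref{sn} as an immediate consequence of Proposition \ref{mn}, and your derivation (quasihomotope both maps to constants, homotope the constants, invoke transitivity via capacity subadditivity) is precisely the expected route. The explicit verification that $p$-quasihomotopy is an equivalence relation is a reasonable thing to check and your argument for it is sound.
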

\bigskip\noindent The proof of Theorem \ref{mn} is based on the example given in \cite{bre03} after Theorem 3. We begin by observing that that in a suitable range of $p$'s points have small preimages under Sobolev maps. 

\begin{lemma}\label{cap}
	Let $f\in \Wem pMN$ be a $p$-quasicontinuous representative, $1<p\le \dim N$. Then for almost every $y\in N$ we have \[ \Cp_p(f^{-1}(y))=0. \] 
\begin{proof}
	For $y\in N$, consider the function $u_k\in \W pM$ given by $$ u_k(x)=w_k\circ f,$$ where $w_k:N\to \R$ is defined by
	\begin{align*}
	w_k(z)=\left\{
	\begin{array}{ll}
	1 &, z\in B(y,1/k^2)\\
	(\log k)^{-1}\log\left(\frac{1/k}{|z-y|}\right) &, z\in A(y,1/k^2,1/k)\\
	0 &, z\notin B(y,1/k)
	\end{array}
	\right.
	\end{align*} Then $u_k|_{f^{-1}(y)}\equiv 1$ $p$-quasieverywhere and therefore $$\Cp_p(f^{-1}(y))\le \liminf_{k\to\infty}\|u_k\|_{1,p}^p.$$ We have the pointwise estimates 
	\begin{align*}
	&0\le u_k(x)\le \chi_{B(y,1/k)}(f(x)),\\
	&|\nabla u_k|(x)\le |\nabla w_k|(f(x))|\nabla f|(x)\le (\log k)^{-1}\frac{\chi_{A(y,1/k^2,1/k)}(f(x))}{|f(x)-y|}|\nabla f|(x)
	\end{align*} 
	almost everywhere. Thus 
	\begin{align*}
	\Cp_p(f^{-1}(y))\le & \liminf_{k\to\infty}\left[\int_M\chi_{B(y,1/k)}\circ f \ud x\right. \\
	&\left. +(\log k)^{-p}\int_M\frac{\chi_{A(y,1/k^2,1/k)}(f(x))}{|f(x)-y|^p} |\nabla f|^p\ud x\right].
	\end{align*} 
	Integrating over $y\in N$ and using Fatou and Fubini we obtain 
	\begin{align}\label{cap1}
	&\int_N\Cp_p(f^{-1}(y))\ud y\\
	\le & \liminf_{k\to\infty}\int_M\int_N\left[\chi_{B(y,1/k)}(f(x))+(\log k)^{-p}|\nabla f|^p(x)\frac{\chi_{A(y,1/k^2,1/k)}(f(x))}{|f(x)-y|^p}\right]\ud y\ud x \nonumber
	\end{align}
	Since \[ \int_M\int_N\chi_{B(y,1/k)}(f(x))\ud y\ud x =\int_M\left(\int_N\chi_{B(f(x),1/k)}(y)\ud y\right)\ud x \le C/k^{\dim N}  \]
	inequality (\ref{cap1}) becomes 
	\begin{align}\label{cap2}
	&\int_N\Cp_p(f^{-1}(y))\ud y\nonumber \\
	\le & \liminf_{k\to\infty}\int_M\int_N(\log k)^{-p}|\nabla f|^p(x)\frac{\chi_{A(y,1/k^2,1/k)}(f(x))}{|f(x)-y|^p}\ud y\ud x.
	\end{align}
	The righthand integral in turn may be written as 
	\begin{align*}
	(\log k)^{-p}\int_M|\nabla f|^p(x)\left(\int_N\frac{\chi_{A(f(x),1/k^2,1/k)}(y)}{|f(x)-y|^p}\ud y\right)\ud x.
	\end{align*}
	For sufficiently large $k\ge 1$ one may estimate
	\begin{align*}
	\int_N\frac{\chi_{A(f(x),1/k^2,1/k)}(y)}{|f(x)-y|^p}\ud y \lesssim C\int_{\R^{\dim N}}\chi_{A(0,1/k^2,1/k)}(y)\frac{\ud y}{|y|^p}\simeq \int_{1/k^2}^{1/k} t^{\dim N-1-p}\ud t.
	\end{align*}
	Since $p\le \dim N$ we obtain \[\int_{1/k^2}^{1/k} t^{\dim N-1-p}\ud t\le \int_{1/k^2}^{1/k} t^{-1}\ud t=\log k. \]
	Plugging all these inequalities into (\ref{cap2}) we obtain
	\begin{align*}
	\int_N\Cp_p(f^{-1}(y))\ud y\le C \liminf_{k\to\infty}\int_M(\log k)^{1-p}|\nabla f|^p(x)\ud x=0,
	\end{align*}
	thus completing the proof.
\end{proof}
\end{lemma}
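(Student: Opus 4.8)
The plan is to exhibit, for each $y\in N$, an explicit family of admissible functions for the $p$-capacity of $f^{-1}(y)$ built from a logarithmic cutoff on $N$, and then to integrate the resulting estimate over $y\in N$.

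First I would fix $y\in N$ and let $w_k:N\to[0,1]$ be the logarithmic cutoff which equals $1$ on $B(y,1/k^2)$, decreases like $(\log k)^{-1}\log(k^{-1}/|z-y|)$ on the annulus $A(y,1/k^2,1/k)$, and vanishes outside $B(y,1/k)$; here $|\cdot|$ is the Riemannian distance on $N$, so that for large $k$ these small balls are biLipschitz to Euclidean balls. Then $u_k=w_k\circ f$ lies in $\W pM$, is $p$-quasicontinuous (as $w_k$ is Lipschitz and $f$ is $p$-quasicontinuous), and equals $1$ $p$-q.e.\ on $f^{-1}(y)$; hence it is admissible for $\Cp_p(f^{-1}(y))$, giving $\Cp_p(f^{-1}(y))\le\liminf_k\|u_k\|_{1,p}^p$. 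The chain rule yields the pointwise bounds $0\le u_k\le\chi_{B(y,1/k)}\circ f$ and $|\nabla u_k|\le(\log k)^{-1}|f-y|^{-1}(\chi_{A(y,1/k^2,1/k)}\circ f)|\nabla f|$ a.e.

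Next I would integrate the resulting estimate for $\|u_k\|_{1,p}^p$ over $y\in N$ and apply Fatou and Fubini. The zeroth-order term contributes $\int_M|B(f(x),1/k)|\,\ud x\lesssim k^{-\dim N}\to0$. For the gradient term I would perform the $y$-integration first:
\[
\int_N\frac{\chi_{A(f(x),1/k^2,1/k)}(y)}{|f(x)-y|^p}\,\ud y\lesssim\int_{1/k^2}^{1/k}t^{\dim N-1-p}\,\ud t\le\int_{1/k^2}^{1/k}t^{-1}\,\ud t=\log k,
\]
using $p\le\dim N$. Hence the gradient term is $\lesssim(\log k)^{1-p}\|\nabla f\|_{L^p(M)}^p$, which tends to $0$ because $p>1$. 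Combining the two estimates gives $\int_N\Cp_p(f^{-1}(y))\,\ud y=0$, and since the integrand is nonnegative it vanishes for almost every $y\in N$.

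The main point that needs care is the admissibility of $u_k$: one must verify that $w_k\circ f\in\W pM$ with the stated gradient bound (chain rule for a Lipschitz function on $N$ composed with a Sobolev map into $N$) and, more delicately, that $w_k\circ f$ inherits $p$-quasicontinuity from $f$ and genuinely equals $1$ on $f^{-1}(y)$ outside a set of zero $p$-capacity, so that it may legitimately be used to bound $\Cp_p(f^{-1}(y))$ (which need not be open, so the version of capacity via quasicontinuous functions $\ge1$ q.e.\ on the set is the relevant one). The remaining ingredients — the $y$-measurability required for Fubini and the comparison of Riemannian with Euclidean balls of small radius on $N$ — are routine.
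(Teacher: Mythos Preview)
Your proposal is correct and follows essentially the same argument as the paper: the same logarithmic cutoff $w_k$ on $N$, the same pointwise bounds on $u_k$ and $|\nabla u_k|$, the same Fatou--Fubini step, and the same key estimate $\int_{1/k^2}^{1/k} t^{\dim N-1-p}\,\ud t\le\log k$ exploiting $p\le\dim N$. The additional care you flag about admissibility and quasicontinuity of $u_k$ is appropriate but is handled implicitly in the paper as well.
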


\begin{corollary}\label{capcor}
	Let $2\le k$ and $1<p\le k$. For a $p$-quasicontinuous representative $f\in \Wem p{M}{S^k}$ the following holds for almost every $y\in S^k$. \[ \lim_{r\to 0}\Cp_p(f^{-1} B(y,r))=0. \]
\end{corollary}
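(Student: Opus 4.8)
The plan is to deduce the corollary from Lemma \ref{cap} together with a monotone-limit property of capacity. By Lemma \ref{cap}, for almost every $y\in S^k$ we have $\Cp_p(f^{-1}(y))=0$. I would fix such a $y$ (of full measure) and aim to show $\lim_{r\to 0}\Cp_p(f^{-1}B(y,r))=0$ for this $y$. The key observation is that the sets $f^{-1}B(y,r)$ are \emph{not} decreasing to $f^{-1}(y)$ in general (preimages of balls can jump), so one cannot appeal directly to continuity of capacity along decreasing sequences with the singleton as limit. Instead, I would intersect the nested family over all $r>0$: $\bigcap_{r>0}f^{-1}B(y,r)=f^{-1}(\{y\})$ (this equality is immediate since $\bigcap_r B(y,r)=\{y\}$), and use that $p$-capacity $\Cp_p$ is an outer capacity that is continuous along decreasing sequences of \emph{open} sets with relatively compact closure, or more robustly, a Choquet-type capacity for which $\Cp_p(\bigcap_j E_j)=\lim_j\Cp_p(E_j)$ whenever $E_1\supset E_2\supset\cdots$ are, say, open (the sets $f^{-1}B(y,r)$ are open since $f$ restricted to the complement of a small-capacity set is continuous — one first removes an exceptional open set $U$ of capacity $<\varepsilon$ and works with $f|_{M\setminus U}$).

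More carefully, the main steps are as follows. First, by $p$-quasicontinuity of $f$, given $\varepsilon>0$ choose an open $U\subset M$ with $\Cp_p(U)<\varepsilon$ such that $f|_{M\setminus U}$ is continuous; then $(f|_{M\setminus U})^{-1}B(y,r)$ is relatively open in $M\setminus U$, hence of the form $V_r\setminus U$ with $V_r\subset M$ open. We have $V_{r}\setminus U\supset V_{r'}\setminus U$ for $r>r'$ and $\bigcap_{r>0}(V_r\setminus U)\subset f^{-1}(\{y\})\cup U$. Second, apply subadditivity: $\Cp_p(f^{-1}B(y,r))\le \Cp_p(V_r\setminus U)+\Cp_p(U)\le \Cp_p(V_r\setminus U)+\varepsilon$. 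Third, use the continuity of $\Cp_p$ along decreasing sequences of open sets to get $\lim_{r\to 0}\Cp_p(V_r\setminus U)=\Cp_p\big(\bigcap_{r>0}(V_r\setminus U)\big)\le \Cp_p(f^{-1}(\{y\})\cup U)\le \Cp_p(f^{-1}(y))+\Cp_p(U)=0+\varepsilon=\varepsilon$. Combining, $\limsup_{r\to 0}\Cp_p(f^{-1}B(y,r))\le 2\varepsilon$, and since $\varepsilon>0$ was arbitrary the limit is $0$.

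The main obstacle I anticipate is the bookkeeping around the fact that $f^{-1}B(y,r)$ need not be open in $M$ (only in $M\setminus U$), which forces the two-step estimate splitting off the exceptional set $U$ of small capacity; and the invocation of the right continuity property of $p$-capacity from below along decreasing open sets (the Choquet capacitability of $\Cp_p$, which is standard — see \cite{HKST07} — but must be stated for the relatively compact situation, unproblematic here since $M$ is compact). Once those two ingredients are in place the argument is a short limiting computation. An alternative, perhaps cleaner, route would be to bypass openness entirely: observe that $\{\Cp_p(f^{-1}B(y,r))\}_{r>0}$ is monotone in $r$, so the limit $L(y):=\lim_{r\to 0}\Cp_p(f^{-1}B(y,r))$ exists; integrate over $y$ and use Fatou together with the estimates already obtained in the proof of Lemma \ref{cap} — indeed the test functions $w_k$ there are $\equiv 1$ on a neighbourhood $B(y,1/k^2)$ of $y$, hence $u_k\equiv 1$ on $f^{-1}B(y,1/k^2)$ $p$-quasieverywhere, so $\Cp_p(f^{-1}B(y,1/k^2))\le \|u_k\|_{1,p}^p$, and the very same computation yields $\int_N L(y)\,\ud y=0$, whence $L=0$ a.e. This second approach reuses Lemma \ref{cap}'s proof almost verbatim and avoids the capacity-continuity lemma, so it is the one I would ultimately write down.
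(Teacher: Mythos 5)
Your proposal contains two arguments; they are not equally sound.

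\textbf{The first approach has a genuine gap.} You invoke ``the continuity of $\Cp_p$ along decreasing sequences of open sets.'' This is not a property that capacities enjoy: the Choquet axioms give continuity under \emph{increasing} sequences of arbitrary sets and under \emph{decreasing} sequences of \emph{compact} sets, but not under decreasing open sequences (the problem is that $\bigcap_j\overline{V_j}$ can be strictly larger than $\bigcap_j V_j$, so taking closures transports capacity to the wrong limit set). Moreover, as you note yourself, the sets $V_r\setminus U$ are not open in $M$ in the first place, so the invocation would be misapplied even if the property held. The paper sidesteps both issues with a small but decisive change: replace the open balls $B(y,r)$ with the \emph{closed} balls $\overline B(y,r)$. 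Since $M\setminus U$ is a closed subset of the compact manifold $M$, and $f|_{M\setminus U}$ is continuous, the preimages $(f|_{M\setminus U})^{-1}(\overline B(y,r))$ are compact and decrease to $(f|_{M\setminus U})^{-1}(y)\subset f^{-1}(y)$ as $r\downarrow 0$; the continuity of $\Cp_p$ along a decreasing sequence of compacta then yields $\lim_{r\to 0}\Cp_p\bigl((f|_{M\setminus U})^{-1}(\overline B(y,r))\bigr)=\Cp_p\bigl((f|_{M\setminus U})^{-1}(y)\bigr)\le\Cp_p(f^{-1}(y))=0$ for a.e.\ $y$ by Lemma~\ref{cap}, and the estimate $\Cp_p(f^{-1}B(y,r))\le\Cp_p\bigl((f|_{M\setminus U})^{-1}(\overline B(y,r))\bigr)+\Cp_p(U)$ finishes the argument. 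So the two-step splitting off of $U$ that you set up is exactly right; the missing ingredient is closed balls, not a continuity-along-open-sets lemma.

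\textbf{The second approach is correct and is a genuinely different route from the paper's.} The observation that the test functions $w_k$ in the proof of Lemma~\ref{cap} are identically $1$ on the whole ball $B(y,1/k^2)$, not merely at $y$, shows directly that $\Cp_p(f^{-1}B(y,1/k^2))\le\|u_k\|_{1,p}^p$; monotonicity in $r$ makes $L(y)=\lim_{r\to0}\Cp_p(f^{-1}B(y,r))$ exist, and Fatou plus the very same computation of Lemma~\ref{cap} gives $\int_N L(y)\,\ud y=0$, hence $L=0$ a.e. This does not use Lemma~\ref{cap} as a black box — it inlines and slightly strengthens its proof — but in exchange it avoids any appeal to continuity of capacity along monotone families. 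Both proofs are short; the paper's keeps Lemma~\ref{cap} modular and relies on standard capacity theory, while yours trades the capacity-theoretic step for a small re-run of the computation. Either is a fine way to write the corollary, but if you go with the first approach you must switch to closed balls.
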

\begin{proof}
	Let $\varepsilon>0$ be arbitrary and let $U\subset M$ be open with $\Cp_p(U)<\varepsilon$ and $f|_{M\setminus U}$ continuous. We may estimate \[ \Cp_p(f^{-1} B(y,r))\le \Cp_p((f|_{M\setminus U})^{-1}( \overline B(y,r)))+\Cp_p(U). \] The sets $(f|_{M\setminus U})^{-1}( \overline B(y,r))$ are compact and decrease to $(f|_{M\setminus U})^{-1}(y)$ as $r>0$  decreases. By the monotonicity of capacity for compact sets therefore $$\limsup_{r\to 0}\Cp_p((f|_{M\setminus U})^{-1}( \overline B(y,r)))=\Cp_p((f|_{M\setminus U})^{-1}(y)).$$ The latter quantity is zero for almost every $y\in S^k$ by Lemma \ref{cap} above. Thus we obtain \[ \Cp_p(f^{-1} B(y,r))\le 0+\Cp_p(U)<\varepsilon. \] Since $\varepsilon >0$ was arbitrary the claim follows.
\end{proof}

\begin{proof}[Proof of Proposition \ref{mn}]
	Suppose $f\in \Wem p{M}{S^k}$. Choose $y_0\in S^k$ so that the claim of Corollary \ref{capcor} holds for $y=y_0$. Define $h:S^k\times [0,\infty]\to S^k$ by
	\begin{align*}
	h(x,t)=\left\{\begin{array}{ll}
	\frac{x-ty_0}{|x-ty_0|},&\ 0\le t<\infty\\
	-y_0,&\ t=\infty
	\end{array}\right.
	\end{align*} 
	Note that $h|_{S^k\setminus\{x_0\}\times [0,\infty]}$ is continuous. We claim that $$H(x,t)=h(f(x),t),\ (x,t)\in M\times [0,\infty]$$ is a $p$-quasihomotopy $f\simeq -y_0$.
	
	Given $\varepsilon>0$ let $U$ be an open set with $\Cp_p(U)<\varepsilon/2$ and $f|_{M\setminus U}$ continuous. Further let $r>0$ be small enough so that $\Cp_p(f^{-1}B(y_0,r))<\varepsilon/2$. Set $E=U\cup [(f^{-1}B(x_0,r))\setminus U]$. Then $E$ is open, $\Cp_p(E)<\varepsilon$ and $H|_{M\setminus E\times [0,\infty]}$ is continuous, $$H(x,0)=\frac{f(x)}{|f(x)|}=f(x),\ H(x,\infty)=-x_0,\ x\in M\setminus E.$$
	
	
\end{proof}

\begin{remark}
	A similar procedure yields a continuous path in $\Wem p{S^n}{S^n}$ between $f$ and a constant map when $p<n$ (see \cite{bre03}), but not when $p=n$. Indeed, in the latter case it is not possible to connect every map to a constant path by a continuous path (\cite[Lemma 1'']{bre03}) and so we see that the converse of Theorem \ref{main} is not true.
\end{remark}

\bigskip\noindent In closing we remark that $\Wem p{B^{k+1}}{S^k}$, $k<p<k+1$ provides another example where path and $p$-quasihomotopy differ.

Consider the map $g:(0,1]\times S^k\to B^{k+1}$ given by \[ g(t,y)=ty. \]
This is a $p$-quasihomotopy equivalence ($p<k+1$) since the map $h(x)=(|x|,x/|x|)$ is $p$-quasicontinuous and $g\circ h=id_{B^{k+1}}$, $h\circ g=id_{(0,1]\times S^k}$ $p$-quasieverywhere. Thus, postcomposition with $g$ defines a continuous map \[G:\Wem p{B^{k+1}}{S^k}\to \Wem p{(0,1]\times S^k}{S^k},\ Gf=f\circ g, \] which preserves $p$-quasihomotopy classes and is bijective (the map $f\mapsto f\circ h$ is an inverse to $G$).

It is known (\cite{bre01}, Proposition 0.2) that $\Wem p{(0,1]\times S^k}{S^k}$ is path connected when $p<k+1$. However, when $k<p<k+1$, the Sobolev space $\Wem p{(0,1]\times S^k}{S^k}$ and consequently $\Wem p{B^{k+1}}{S^k}$ is not $p$-quasiconnected. (This easily seen by noting that the map $f(t,y)=y$, $(t,y)\in (0,1]\times S^k$, is not $p$-quasihomotopic to a constant map.)

\subsubsection*{Acknowledgements}

I would like to thank Pekka Pankka for reading the manuscript and making many valuable comments. I also thank Pawel Goldstein for useful discussions.

\bibliographystyle{plain}
\bibliography{abib}

\def\cprime{$'$} \def\cprime{$'$} \def\cprime{$'$} \def\cprime{$'$}
  \def\cprime{$'$}
\begin{thebibliography}{10}

\bibitem{bjo11}
Anders Bj{\"o}rn and Jana Bj{\"o}rn.
\newblock {\em Nonlinear potential theory on metric spaces}, volume~17 of {\em
  EMS Tracts in Mathematics}.
\newblock European Mathematical Society (EMS), Z\"urich, 2011.

\bibitem{bre03}
Ha{\"{\i}}m Brezis.
\newblock The fascinating homotopy structure of {S}obolev spaces.
\newblock {\em Atti Accad. Naz. Lincei Cl. Sci. Fis. Mat. Natur. Rend. Lincei
  (9) Mat. Appl.}, 14(3):207--217 (2004), 2003.
\newblock Renato Caccioppoli and modern analysis.

\bibitem{bre01}
Haim Brezis and Yanyan Li.
\newblock Topology and {S}obolev spaces.
\newblock {\em J. Funct. Anal.}, 183(2):321--369, 2001.

\bibitem{bri99}
Martin~R. Bridson and Andr{\'e} Haefliger.
\newblock {\em Metric spaces of non-positive curvature}, volume 319 of {\em
  Grundlehren der Mathematischen Wissenschaften [Fundamental Principles of
  Mathematical Sciences]}.
\newblock Springer-Verlag, Berlin, 1999.

\bibitem{bur84}
Francis~E. Burstall.
\newblock Harmonic maps of finite energy from noncompact manifolds.
\newblock {\em J. London Math. Soc. (2)}, 30(2):361--370, 1984.

\bibitem{cal56}
A.~P. Calder\'on and A.~Zygmund.
\newblock On singular integrals.
\newblock {\em Amer. J. Math.}, 78:289--309, 1956.

\bibitem{duo01}
Javier Duoandikoetxea.
\newblock {\em Fourier analysis}, volume~29 of {\em Graduate Studies in
  Mathematics}.
\newblock American Mathematical Society, Providence, RI, 2001.
\newblock Translated and revised from the 1995 Spanish original by David
  Cruz-Uribe.

\bibitem{eel88}
J.~Eells and L.~Lemaire.
\newblock Another report on harmonic maps.
\newblock {\em Bull. London Math. Soc.}, 20(5):385--524, 1988.

\bibitem{eel78}
James Eells and Luc Lemaire.
\newblock A report on harmonic maps.
\newblock {\em Bull. London Math. Soc.}, 10(1):1--68, 1978.

\bibitem{eel64}
James Eells, Jr. and Joseph~H. Sampson.
\newblock Harmonic mappings of {R}iemannian manifolds.
\newblock {\em Amer. J. Math.}, 86:109--160, 1964.

\bibitem{gol12}
Pawe\l Goldstein and Piotr Haj\l~asz.
\newblock Sobolev mappings, degree, homotopy classes and rational homology
  spheres.
\newblock {\em J. Geom. Anal.}, 22(2):320--338, 2012.

\bibitem{gra14}
Loukas Grafakos.
\newblock {\em Classical {F}ourier analysis}, volume 249 of {\em Graduate Texts
  in Mathematics}.
\newblock Springer, New York, third edition, 2014.

\bibitem{haj07}
Piotr Haj\l~asz.
\newblock Sobolev mappings: {L}ipschitz density is not a bi-{L}ipschitz
  invariant of the target.
\newblock {\em Geom. Funct. Anal.}, 17(2):435--467, 2007.

\bibitem{haj96}
Piotr Haj{\l}asz.
\newblock Sobolev spaces on an arbitrary metric space.
\newblock {\em Potential Anal.}, 5(4):403--415, 1996.

\bibitem{han03}
Fengbo Hang and Fanghua Lin.
\newblock Topology of {S}obolev mappings. {II}.
\newblock {\em Acta Math.}, 191(1):55--107, 2003.

\bibitem{hat02}
Allen Hatcher.
\newblock {\em Algebraic topology}.
\newblock Cambridge University Press, Cambridge, 2002.

\bibitem{hei01}
Juha Heinonen.
\newblock {\em Lectures on analysis on metric spaces}.
\newblock Universitext. Springer-Verlag, New York, 2001.

\bibitem{hei98}
Juha Heinonen and Pekka Koskela.
\newblock Quasiconformal maps in metric spaces with controlled geometry.
\newblock {\em Acta Math.}, 181(1):1--61, 1998.

\bibitem{HKST07}
Juha Heinonen, Pekka Koskela, Nageswari Shanmugalingam, and Jeremy Tyson.
\newblock {\em Sobolev spaces on metric measure spaces: an approach based on
  upper gradients}.
\newblock New Mathematical Monographs. Cambridge University Press, United
  Kingdom, first edition, 2015.

\bibitem{jost94}
J{\"u}rgen Jost.
\newblock Equilibrium maps between metric spaces.
\newblock {\em Calc. Var. Partial Differential Equations}, 2(2):173--204, 1994.

\bibitem{jost95}
J{\"u}rgen Jost.
\newblock Convex functionals and generalized harmonic maps into spaces of
  nonpositive curvature.
\newblock {\em Comment. Math. Helv.}, 70(4):659--673, 1995.

\bibitem{jost96}
J{\"u}rgen Jost.
\newblock Generalized harmonic maps between metric spaces.
\newblock In {\em Geometric analysis and the calculus of variations}, pages
  143--174. Int. Press, Cambridge, MA, 1996.

\bibitem{jost97}
J{\"u}rgen Jost.
\newblock Generalized {D}irichlet forms and harmonic maps.
\newblock {\em Calc. Var. Partial Differential Equations}, 5(1):1--19, 1997.

\bibitem{kei03}
Stephen Keith.
\newblock Modulus and the {P}oincar\'e inequality on metric measure spaces.
\newblock {\em Math. Z.}, 245(2):255--292, 2003.

\bibitem{koh57}
J.~J. Kohn and D.~C. Spencer.
\newblock Complex {N}eumann problems.
\newblock {\em Ann. of Math. (2)}, 66:89--140, 1957.

\bibitem{kor93}
Nicholas~J. Korevaar and Richard~M. Schoen.
\newblock Sobolev spaces and harmonic maps for metric space targets.
\newblock {\em Comm. Anal. Geom.}, 1(3-4):561--659, 1993.

\bibitem{mal03}
Jan Mal\'y, David Swanson, and William~P. Ziemer.
\newblock The co-area formula for {S}obolev mappings.
\newblock {\em Trans. Amer. Math. Soc.}, 355(2):477--492, 2003.

\bibitem{nak93}
Nobumitsu Nakauchi.
\newblock Homomorphism between homotopy groups induced by elements of the
  {S}obolev space {$L^{1,p}(M,N)$}.
\newblock {\em Manuscripta Math.}, 78(1):1--7, 1993.

\bibitem{pig09}
Stefano Pigola and Giona Veronelli.
\newblock On the homotopy class of maps with finite {$p$}-energy into
  non-positively curved manifolds.
\newblock {\em Geom. Dedicata}, 143:109--116, 2009.

\bibitem{uhl83}
Richard Schoen and Karen Uhlenbeck.
\newblock Boundary regularity and the {D}irichlet problem for harmonic maps.
\newblock {\em J. Differential Geom.}, 18(2):253--268, 1983.

\bibitem{sch79}
Richard Schoen and Shing~Tung Yau.
\newblock Compact group actions and the topology of manifolds with nonpositive
  curvature.
\newblock {\em Topology}, 18(4):361--380, 1979.

\bibitem{see59}
R.~T. Seeley.
\newblock Singular integrals on compact manifolds.
\newblock {\em Amer. J. Math.}, 81:658--690, 1959.

\bibitem{sha00}
Nageswari Shanmugalingam.
\newblock Newtonian spaces: an extension of {S}obolev spaces to metric measure
  spaces.
\newblock {\em Rev. Mat. Iberoamericana}, 16(2):243--279, 2000.

\bibitem{teri2}
Elefterios Soultanis.
\newblock {Existence of {p}-energy minimizers in homotopy classes and lifts of
  Newtonian maps}.
\newblock {\em To appear in J. Anal. Math.}
\newblock arXiv:1506.07767 [math.DG].

\bibitem{teri1}
Elefterios Soultanis.
\newblock {Homotopy classes of Newtonian spaces}.
\newblock {\em To appear in Rev. Mat. Iberoamericana}.
\newblock arXiv:1309.6472 [math.MG].

\bibitem{ver12}
Giona Veronelli.
\newblock A general comparison theorem for {$p$}-harmonic maps in homotopy
  class.
\newblock {\em J. Math. Anal. Appl.}, 391(2):335--349, 2012.

\bibitem{whi86}
Brian White.
\newblock Infima of energy functionals in homotopy classes of mappings.
\newblock {\em J. Differential Geom.}, 23(2):127--142, 1986.

\bibitem{whi88}
Brian White.
\newblock Homotopy classes in {S}obolev spaces and the existence of energy
  minimizing maps.
\newblock {\em Acta Math.}, 160(1-2):1--17, 1988.

\end{thebibliography}
\end{document}